\documentclass{amsart}
\usepackage{booktabs, colonequals, comment, enumitem, tikz}
\usepackage[hidelinks]{hyperref}

\usetikzlibrary{arrows.meta}

\date\today

\newtheorem{corollary}{Corollary}
\newtheorem{lemma}{Lemma}
\newtheorem{proposition}{Proposition}
\newtheorem{theorem}{Theorem}

\theoremstyle{definition}

\newtheorem{example}{Example}

\theoremstyle{remark}
\newtheorem{remark}{Remark}

\DeclareMathOperator{\Cay}{Cay}
\DeclareMathOperator{\character}{char}
\DeclareMathOperator{\id}{id}

\DeclareMathOperator{\Sk}{Sk}

\begin{document}
\title{Involutions in the Cayley--Dickson Construction}
\author{Masood Aryapoor}
\address[Masood Aryapoor]{Department of Business and Mathematics, M\"alar\-dalen University, SE-631 05 Eskilstuna, Sweden}
\email{masood.aryapoor@mdu.se}

\author{Per B\"ack}
\address[Per Bäck]{Department of Business and Mathematics, M\"alar\-dalen University,  SE-721 23  V\"aster\r{a}s, Sweden}
\email{per.back@mdu.se}

\author{Sophie Pautrel}
\address[Sophie Pautrel]{Department of Business and Mathematics, M\"alar\-dalen University,  SE-721 23  V\"aster\r{a}s, Sweden}
\email{sophie.pautrel@gmail.com}

\subjclass[2020]{Primary 17A36; Secondary 16W10, 17A35, 17A75}
\keywords{Cayley--Dickson algebra, Cayley--Dickson construction, involutions, \(*\)-algebras, composition algebras}

\begin{abstract}We determine all involutions in the Cayley--Dickson construction that extend the involution of the original \(*\)-algebra. We also find all algebra isomorphisms between the resulting Cayley doubles that extend the identity automorphism of the original \(*\)-algebra, and consequently classify the resulting \(*\)-algebras up to \(*\)-algebra isomorphism. As applications, we show that Cayley doubles without zero divisors admit exactly one additional involution, prove that the classical Cayley--Dickson involution is the unique scalar involution, and obtain a classification of the \(*\)-algebras arising from \(\mathbb{R}\) up to dimension \(4\).
\end{abstract}

\maketitle

\section{Introduction}
To paraphrase McCrimmon~\cite[Chapter 2.3]{McC80}, the Cayley--Dickson construction, as its name suggests, is due to Albert~\cite{Alb42}; introduced by Dickson~\cite{Dic19} from earlier work of Cayley~\cite{Cay45}, it was later generalized by Albert~\cite{Alb42} into its present form.

The construction produces new \(*\)-algebras, that is, algebras equipped with an involution, from old ones by a doubling process. Starting from the real numbers \(\mathbb{R}\) and iterating the process gives, in particular, the complex numbers \(\mathbb{C}\), the quaternions \(\mathbb{H}\), and the octonions \(\mathbb{O}\); together with \(\mathbb{R}\), these are precisely the real normed division algebras. 

Despite its importance, the construction retains a somewhat ad hoc character. In~\cite{AB25}, the first and second authors showed that all Cayley doubles can instead be realized as quotients of noncommutative nonassociative polynomial algebras, extending the classical construction of \(\mathbb{C}\) and \(\mathbb{H}\) as polynomial quotients. 

In this article, we investigate another aspect of the Cayley--Dickson construction: the choice of involution. Given a Cayley double, we determine all involutions extending the involution of the original \(*\)-algebra (\autoref{thm:inv}). We then find all algebra isomorphisms between Cayley doubles that extend the identity automorphism of the original \(*\)-algebra (\autoref{thm:autom}), and consequently classify the resulting \(*\)-algebras up to \(*\)-algebra isomorphism (\autoref{prop:starisom}). As applications, we show that in the absence of zero divisors there is exactly one additional involution (\autoref{cor:alpha-beta-involutions}), prove that the involution appearing in the classical construction is the unique scalar involution (\autoref{prop:nuclear-inv}), and obtain a classification of the \(*\)-algebras arising from \(\mathbb{R}\) up to dimension \(4\) (\autoref{prop:cay-over-R}).

The article is organized as follows. In \autoref{sec:preliminaries}, we provide preliminaries on \(*\)-algebras. In \autoref{sec:involutions}, we determine all involutions of Cayley doubles extending a given involution, and in \autoref{sec:isomorphisms} we determine the corresponding algebra isomorphisms and \(*\)-algebra isomorphisms.

\section{Preliminaries}\label{sec:preliminaries}
All rings and algebras are assumed to be unital. By an \emph{algebra}, we mean an algebra \(A\) over a commutative associative ring \(K\), which is not necessarily commutative or associative itself.

An \emph{involution of \(A\)} is a \(K\)-linear map
\[
*\colon A\to A,\qquad x\mapsto x^*,
\]
satisfying
\[
(x^*)^*=x
\quad\text{and}\quad
(xy)^*=y^*x^*
\qquad\text{for all }x,y\in A.
\]
An algebra equipped with an involution is called a \emph{\(*\)-algebra}.

Let \(A\) be a \(*\)-algebra and let \(\mu\in K\setminus\{0\}\). The \emph{Cayley double of \(A\)}, denoted by \(\Cay(A,\mu)\), is the algebra over \(K\) whose underlying \(K\)-module is \(A\oplus A\) and whose multiplication is given by
\[
(u,v)(x,y)\colonequals (ux+\mu y^*v, vx^*+yu)
\qquad\text{for all }u,v,x,y\in A.
\]
An involution \(\gamma\) of \(\Cay(A,\mu)\) is called an \emph{\(A\)-involution} if it extends the involution \(*\) of \(A\), that is,
\[
\gamma(x,0)=(x^*,0)
\qquad\text{for all }x\in A.
\]
We write \(\Cay_\gamma(A,\mu)\) for the \(*\)-algebra obtained by equipping \(\Cay(A,\mu)\) with the \(A\)-involution \(\gamma\). The standard \(A\)-involution \(\alpha\) of \(\Cay(A,\mu)\) is defined by
\[
\alpha(x,y)\colonequals (x^*,-y)
\qquad\text{for all }x,y\in A.
\]

We also use the following standard notions from nonassociative algebra. The \emph{commutator} \([\cdot,\cdot]\colon A\times A\to A\) and the \emph{associator} \((\cdot,\cdot,\cdot)\colon A\times A\times A\to A\) are defined, respectively, by
\[
[x,y]\colonequals xy-yx
\quad\text{and}\quad
(x,y,z)\colonequals (xy)z-x(yz)
\qquad\text{for all }x,y,z\in A.
\]
We denote by \([A,A]\) the \(K\)-submodule of \(A\) generated by all commutators \([x,y]\).

The \emph{left, middle,} and \emph{right nuclei of \(A\)} are

\begin{align*}
N_l(A)
&\colonequals
\{x\in A \mid (x,y,z)=0 \text{ for all } y,z\in A\},\\
N_m(A)
&\colonequals
\{y\in A \mid (x,y,z)=0 \text{ for all } x,z\in A\},\\
N_r(A)
&\colonequals
\{z\in A \mid (x,y,z)=0 \text{ for all } x,y\in A\}.
\end{align*}
Their intersection
\[
N(A)\colonequals N_l(A)\cap N_m(A)\cap N_r(A)
\]
is called the \emph{nucleus of \(A\)}. The left, middle, and right nuclei are associative subalgebras of \(A\); hence, so is the nucleus. 

The \emph{commuter of \(A\)} is
\[
C(A)\colonequals\{x\in A\mid [x,y]=0 \text{ for all } y\in A\},
\]
and the \emph{center of \(A\)} is
\[
Z(A)\colonequals N(A)\cap C(A).
\]
In particular, \(Z(A)\) is a commutative associative subalgebra of \(A\). We also define
\[
C_*(A)\colonequals\{x\in C(A)\mid x^*=x\}
\quad\text{and}\quad
Z_*(A)\colonequals\{x\in Z(A)\mid x^*=x\}.
\]

The algebra \(A\) is called \emph{\(2\)-torsion-free} if \(2x=0\) implies \(x=0\) for all \(x\in A\). Finally, we say that \(\mu\in K\setminus\{0\}\) is \emph{cancellable} if \(\mu x=0\) implies \(x=0\) for all \(x\in A\).

\section{Involutions}\label{sec:involutions}
\subsection{Cayley elements, skew elements, and \texorpdfstring{\(*\)}{*}-elements}
Let us recall some definitions from \cite{McC85} for elements of a \(*\)-algebra \(A\). 

An element \(a\in A\) is called \emph{skew} if \[a^*=-a.\]

We say that \(a\in A\) is a \emph{\(*\)-element} if
\[
ax=x^*a\qquad\text{for all }x\in A.
\]

Finally, an element \(a\in A\) is called \emph{Cayley} if
\[
(xy)a = y(xa) = (xa)y^*\qquad\text{for all }x,y\in A.
\]

In the following lemma, we collect several properties of these elements.

\begin{lemma}\label{lem:1}
Let \(A\) be a \(*\)-algebra and let \(a\in A\). Then the following hold:
\begin{enumerate}[label={(\roman*)}]
\item \(a\) is a skew \(*\)-element if and only if\[
(ax)^*=-ax \qquad\text{for all }x\in A\]
if and only if 
\[
(xa)^*=-xa \qquad\text{for all }x\in A.
\]
\label{lem:1i}
    \item If \(a\) is Cayley, then all \(ax\)   are \(*\)-elements, hence \(a\) is a \(*\)-element.  \label{lem:1ii}
    \item \(a\) is Cayley if and only if
    \[      a(xy)=(ay)x=x^*(ay)
        \qquad \text{for all } x,y\in A.
    \] \label{lem:1iii}
    \item If all \(ax\) are \(*\)-elements, then
    \[       a\in N_l(A) \iff a\in N_r(A).
    \] \label{lem:1iv}
    \item If \(a\) is Cayley, then
    \[
        a\in N_l(A)\iff a\in N_m(A)\iff a\in N_r(A).
    \]\label{lem:1v}
    \item If \(a\in N(A)\), then \(a\) is a \(*\)-element if and only if \(a\) is Cayley.\label{lem:1vi}
    \item If \((xy)a=y(xa)\) for all \(x,y\in A\), then
    \[
        a\in N_r(A)
        \iff [A,A]a=0
        \iff x(ya)=y(xa)
        \qquad \text{for all } x,y\in A.
    \]\label{lem:1vii}
    
    \item If $(ax)y=a(yx)$ for all \(x,y\in A\), then
    \[
        a\in N_l(A)
        \iff a[A,A]=0
        \iff (ax)y=(ay)x
        \qquad \text{for all } x,y\in A.
    \]\label{lem:1vii'}
    
    \item \(a\) is a skew \(*\)-element with
\(
x(ya)=y(xa)
\text{ for all } x,y\in A
\)
if and only if
\[
y(xa^*)=-x(ay^*)
\qquad \text{for all } x,y\in A.
\] \label{lem:1viii}
    \item  If \(a\) is a skew \(*\)-element, then \(2a^2=0\). In particular, if \(A\) is \(2\)-torsion-free, then \(a^2=0\) for all skew \(*\)-elements. \label{lem:1ix}
\end{enumerate}
\end{lemma}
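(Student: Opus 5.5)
We prove each item by direct manipulation of the defining identities, using only that \(*\) is an anti-automorphism of order two together with the definitions of skew, \(*\)-element and Cayley element. We treat the items in order, since several later ones reuse earlier ones.

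For \ref{lem:1i}: if \(a\) is a skew \(*\)-element, then \((ax)^*=x^*a^*=-x^*a=-ax\). Conversely, \(x=1\) gives \(a^*=-a\), and then \(-ax=(ax)^*=x^*a^*=-x^*a\), so \(ax=x^*a\). The condition on \(xa\) is handled symmetrically.

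For \ref{lem:1ii}: from \((xa)y^*=(xy)a\), replacing \(y\) by \(y^*\), we get \((xa)y=(xy^*)a=y(xa)\) by the first Cayley identity. This says \(xa\) is a \(*\)-element, and \(x=1\) shows \(a\) is one too. Since \(a\) is a \(*\)-element, \(xa=x^*a\)-type swaps convert between \(ax\) and \(x^*a\), so ``all \(ax\)'' follows as well.

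For \ref{lem:1iii}: apply \(*\) to the Cayley identities and use that \(a\) is a \(*\)-element to move \(a\) from right to left. This yields the mirrored identities, and the argument is symmetric.

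For \ref{lem:1iv}: write \((x,y,a)\) and use the \(*\)-element property of \(a\), \(ax\) and \(ay\) to rewrite
\[(ax)y=y^*(ax)=y^*(x^*a),\]
and similarly for the other bracketing. This shows \((a,x,y)\) corresponds to \(-(y^*,x^*,a)\).

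For \ref{lem:1v}: combine \ref{lem:1iv} with the Cayley identities. For example, \((x,a,y)=(xa)y-x(ay)\) rewrites via \((xa)y=(xy^*)a\) and \(x(ay)=x(y^*a)\) into an associator with \(a\) in the right slot. This links \(N_m\) to \(N_r\).

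For \ref{lem:1vi}: if \(a\) is in the nucleus and is a \(*\)-element, then \((xy)a=x(ya)=x(y^*)^*a\)-type rewrites give both Cayley identities. The converse is \ref{lem:1ii}.

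For \ref{lem:1vii}: the hypothesis gives \((x,y,a)=(xy)a-x(ya)=y(xa)-x(ya)\). Hence \(a\in N_r\) if and only if \(x(ya)=y(xa)\). Using the hypothesis twice, \([x,y]a=(xy)a-(yx)a=y(xa)-x(ya)\), so this is also equivalent to \([A,A]a=0\). Item \ref{lem:1vii'} is the mirror argument.

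For \ref{lem:1viii}: take \(*\) of \(x(ya)=y(xa)\), use skewness, and substitute \(y\to y^*\).

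For \ref{lem:1ix}: \(a\cdot a=a^*a=-a^2\), so \(2a^2=0\).

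The main obstacle is keeping the bookkeeping in \ref{lem:1iv}, \ref{lem:1v} and \ref{lem:1viii} consistent, since substitutions \(x\mapsto x^*\) must be tracked carefully. We would first verify \ref{lem:1iv} explicitly, then derive \ref{lem:1v} from it.
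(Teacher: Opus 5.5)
Your strategy of checking each item by direct manipulation of the defining identities is the paper's. Items \ref{lem:1i}, \ref{lem:1iv}, \ref{lem:1v}, \ref{lem:1vii}, \ref{lem:1vii'} and \ref{lem:1ix} go through as you describe; your identity $(x,a,y)=(x,y^*,a)$ in \ref{lem:1v} is even slightly more direct than the paper's route through \ref{lem:1iii}.

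The genuine gap is \ref{lem:1viii}. It is an equivalence, and you only address the forward direction. The converse needs a specialization you never mention:
\begin{itemize}
\item Put $x=1$ in $y(xa^*)=-x(ay^*)$ to get $ya^*=-ay^*$.
\item Then $y=1$ gives $a^*=-a$, and substituting back gives $ya=ay^*$, so $a$ is a skew $*$-element.
\item Only now does $x(ya)=x(ay^*)=-y(xa^*)=y(xa)$ follow.
\end{itemize}
This is exactly the direction the proof of Theorem~\ref{thm:inv} relies on to extract the properties of $a$. Your forward sketch is also roundabout. Taking $*$ of $x(ya)=y(xa)$ gives $(ay^*)x^*=(ax^*)y^*$, which matches the target only after you apply $*$ to the target as well. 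The direct chain is $y(xa^*)=-y(xa)=-x(ya)=-x(ay^*)$.

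The step you describe for \ref{lem:1iii} fails as stated. Applying $*$ to $(xy)a=y(xa)=(xa)y^*$ yields $a^*(y^*x^*)=(a^*x^*)y^*=y(a^*x^*)$. These are the mirrored identities for $a^*$, not for $a$, and a Cayley element need not satisfy $a^*=\pm a$. For example, in $\mathbb{R}[s,t]/(s^2,t^2,st)$ with $s^*=s$ and $t^*=-t$, the element $s+t$ is a $*$-element of an associative algebra, hence Cayley, yet $(s+t)^*=s-t$.

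The fix is to drop the involution step. Since $a$ is a $*$-element by \ref{lem:1ii}, rewrite every $za$ as $az^*$. This turns the Cayley identities into $a(y^*x^*)=y(ax^*)=(ax^*)y^*$. After renaming $y^*\mapsto x$ and $x^*\mapsto y$, this is exactly $a(xy)=x^*(ay)=(ay)x$. Conversely, $y=1$ in $(ay)x=x^*(ay)$ makes $a$ a $*$-element, and the same rewriting runs backwards. This is cleaner than the paper's argument.

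Two smaller slips:
\begin{itemize}
\item In \ref{lem:1ii}, your chain should end in $y^*(xa)$, not $y(xa)$.
\item In \ref{lem:1vi}, the identity $(xy)a=y(xa)$ needs an explicit reordering of $x$ and $y$ via the anti-automorphism: $(xa)y^*=(ax^*)y^*=a(x^*y^*)=a(yx)^*=(yx)a=y(xa)$. Combine this with $(xy)a=x(ya)=x(ay^*)=(xa)y^*$.
\end{itemize}
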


\begin{proof}
    \ref{lem:1i}: If \(a\in A\) is a skew \(*\)-element, then \((ax)^*=x^*a^*=x^*(-a)=-x^*a=-ax\) for all \(x\in A\).  If \((ax)^*=-ax\) for all \(x\in A\), then \(a^*=-a\), so \((xa)^*=a^*x^*=-ax^*=(ax^*)^*=xa^*=x(-a)=-xa\). If \((xa)^*=-xa\) for all \(x\in A\), then \(a^*=-a\), so \(x^*a=-(x^*a)^*=-a^*x=ax\); hence \(a\) is a skew \(*\)-element.\\

    \noindent\ref{lem:1ii}: The first implication follows from the definition of a Cayley element. The second implication follows from \(a1\) being a \(*\)-element. \\
    
    \noindent\ref{lem:1iii}: Suppose that \(a\in A\) is Cayley. Then \(a\) is a \(*\)-element, so \(a(xy)=(xy)^*a=(y^*x^*)a\) for all \(x,y\in A\). Since \(a\) is Cayley, \((y^*x^*)a=x^*(y^*a)\). Since all \(ax\) are \(*\)-elements, \(a\) is in particular a \(*\)-element, and so \(x^*(y^*a)=x^*(ay)=(ay)x\). Conversely, if \(a(xy)=(ay)x=x^*(ay)\) for all \(x,y\in A\), the second equality says that all \(ay\) are \(*\)-elements. It then follows from the first equality that \(x^*(y^*a)=x^*(ay)=(ay)x=a(xy)=(xy)^*a=(y^*x^*)a\).\\
    
    \noindent\ref{lem:1iv}: Suppose that all \(ax\) are  \(*\)-elements. Then, \(a(xy)=(xy)^*a=(y^*x^*)a\) and \((ax)y=y^*(ax)=y^*(x^*a)\) for all \(x,y\in A\). Hence, \(a\in N_l(A)\iff a\in N_r(A)\).\\

    \noindent\ref{lem:1v}: Assume that \(a\in A\) is Cayley. By \ref{lem:1iv}, \(a\in N_l(A)\iff a\in N_r(A)\). If \(a\in N_l(A)\), then \((xa)y=(ax^*)y=a(x^*y)=x(ay)\) for all \(x,y\in A\), so \(a\in N_m(A)\). Conversely, if \(a\in N_m(A)\), then \(a(xy)=x^*(ay)=(x^*a)y=(ax)y\) for all \(x,y\in A\), so \(a\in N_l(A)\). \\
    
    \noindent\ref{lem:1vi}: Suppose that \(a\in N(A)\) is a \(*\)-element. Then, \((xy)a=x(ya)=x(ay^*)=(xa)y^*\) for all \(x,y\in A\). Moreover, \((xa)y^*=(ax^*)y^*=a(x^*y^*)=a(yx)^*=(yx)a=y(xa)\), so \(a\) is Cayley. By \ref{lem:1ii}, the converse is always true.\\

   \noindent\ref{lem:1vii}: Suppose that \((xy)a=y(xa)\) for all \(x,y\in A\). If \(a\in N_r(A)\), then \((xy)a=x(ya)=(yx)a\) for all \(x,y\in A\), so \([A,A]a=0\). If \([A,A]a=0\), then \(x(ya)=(yx)a=(xy)a=y(xa)\) for all \(x,y\in A\). Finally, if \(x(ya)=y(xa)\) for all \(x,y\in A\), then \((xy)a=y(xa)=x(ya)\), so \(a\in N_r(A).\)\\

   \noindent\ref{lem:1vii'}: The proof is similar to that of \ref{lem:1vii}.\\

    \noindent\ref{lem:1viii}: If \(a\in A\) is a skew \(*\)-element with \(x(ya)=y(xa)\) for all \(x,y\in A\), then \(y(xa^*)=-y(xa)=-x(ya)=-x(ay^*)\) for all \(x,y\in A\). Conversely, if \(y(xa^*)=-x(ay^*)\) for all \(x,y\in A\), then by setting \(x=1\), we see that \(a\) is a skew \(*\)-element and \(x(ya)=-y(a^*x^*)=-y(xa)^*=y(xa)\).\\
    
    \noindent\ref{lem:1ix}: Let \(a\in A\) be a skew \(*\)-element, i.e.,  \(a^*=-a\) and \(ax=x^*a\) for all \(x\in A\). Setting \(x=a\), we see that \(a^2=a^*a=-a^2\). Hence \(2a^2=0\), so if \(A\) is  \(2\)-torsion-free, then \(a^2=0\).
\end{proof}

\begin{remark}
    In \autoref{lem:1}, \ref{lem:1ii} slightly extends \cite[(i) of Proposition~5.3]{McC85}, while \ref{lem:1vii} generalizes the combination of \cite[(vi) of Proposition~5.3 and (vi) of Theorem~5.5]{McC85}.
\end{remark}

\subsection{Classification of involutions}
In this subsection, we classify all \(A\)-involutions of \(\Cay(A,\mu)\), where \(A\) is a \(*\)-algebra and \(\mu\in K\setminus\{0\}\). For related work on involutions of composition algebras, see, e.g., ~\cite{Pum03}.

For each \(x\in A\), the elements \(xx^*\) and \(x+x^*\) are called the \emph{norm} and \emph{trace of \(x\)}, respectively.

\begin{theorem}\label{thm:inv}
Let \(A\) be a \(*\)-algebra and let \(\mu\in K\setminus\{0\}\). A map 
\[\gamma\colon \Cay(A,\mu)\to\Cay(A,\mu)\] is an \(A\)-involution of \(\Cay(A,\mu)\) if and only if it is of the form

\begin{equation}\label{eq:gamma_form}
\gamma(x,y)=(x^*+ay^*,by)
\qquad (x,y\in A),
\end{equation}
for some \(a,b\in A\) satisfying

\begin{align}
    a\in N(A) \quad & \text{and} \quad a \text{ is a skew \(*\)-element}, \label{eq:inv_a}\\
    b\in N_l(A) \quad & \text{and} \quad b^2=1, \label{eq:inv_b}\\
    a&=ba, \label{eq:inv_3}\\
    \mu xy&=a^2(xy)+\mu(xb^*)(by) \qquad\text{for all \(x,y\in A\).} \label{eq:inv_4}
\end{align}
Moreover, the norm and trace induced by \(\gamma\) are given by

\begin{align}
(x,y)\gamma(x,y)
&=\left(xx^*+xay^*+\mu(by)^*y,(b+1)yx-y^2a\right),\label{eq:norm}\\
(x,y)+\gamma(x,y)
&=\left(x+x^*+ay^*,(b+1)y\right),
\end{align}
for all \(x,y\in A\).
\end{theorem}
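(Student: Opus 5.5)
We first pin down the shape of \(\gamma\) from the extension condition and the product rule; then the conditions (eq:inv_a)--(eq:inv_4) come out of the involution axioms. Write \(\gamma(0,y)=(f(y),g(y))\) with \(f,g\) \(K\)-linear, so that \(\gamma(x,y)=(x^*+f(y),g(y))\). Note the identity \((0,y)=(y,0)\cdot\ldots\) fails in general. The useful one is \((0,1)(x,0)=(0,x^*)\), which holds because \((0,1)(x,0)=(0\cdot x+\mu 0, 1x^*+0)\). Hence \((0,y)=(0,1)(y^*,0)\). Applying the anti-multiplicativity of \(\gamma\) gives
\[
\gamma(0,y)=\gamma(y^*,0)\gamma(0,1)=(y,0)(f(1),g(1)).
\]
Set \(a:=f(1)\) and \(b:=g(1)\). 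The product rule gives \((y,0)(a,b)=(ya,by)\). So far this yields \(\gamma(x,y)=(x^*+ya,by)\). Once we know \(a\) is a skew \(*\)-element, \(ya=ay^*\), which is the stated form \eqref{eq:gamma_form}.

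Next we impose the remaining axioms.

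\textbf{Involutivity.} The condition \(\gamma^2=\id\) on \((0,y)\) reads
\[
\gamma(ya,by)=\bigl((ya)^*+(by)a,\;b(by)\bigr)=(0,y).
\]
This gives \(b(by)=y\) for all \(y\); at \(y=1\) it gives \(b^2=1\). It also gives \((by)a=-(ya)^*=-a^*y^*\); at \(y=1\) it gives \(ba=-a^*\).

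\textbf{Anti-multiplicativity.} Bilinearity reduces \(\gamma(uv)=\gamma(v)\gamma(u)\) to four cases, with \(u,v\) each of type \((x,0)\) or \((0,y)\).
\begin{enumerate}[label=(\alph*)]
\item \((x,0),(x',0)\): automatic.
\item \((x,0)(0,y)=(0,yx)\): requires \(\bigl((yx)a,b(yx)\bigr)=(ya,by)(x^*,0)\).
\item \((0,y)(x,0)=(0,yx^*)\): gives an analogous identity.
\item \((0,y)(0,y')=(\mu y'^*y,0)\): requires \((\mu y'^*y)^*\) to equal the product \((y'a,by')(ya,by)\).
\end{enumerate}
Expanding these with the product rule, we expect the second coordinates of (b) and (c) to give \(b(yx)=(by)x\) and similar relations. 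Combined with \(b(by)=y\), these should yield \(b\in N_l(A)\). The first coordinates of (b) and (c) should say that \(a\) is Cayley, i.e. that \(a\) satisfies the identities of \autoref{lem:1}\ref{lem:1iii}. They also give skewness: set \(x=1\) and combine with \(ba=-a^*\), and deduce \(a=ba\), using \(b\in N_l(A)\) and \(b^2=1\). \autoref{lem:1}\ref{lem:1ii} then makes \(a\) a \(*\)-element, hence skew \(*\).

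Nuclearity should come from \autoref{lem:1}\ref{lem:1v}, \ref{lem:1vii} and \ref{lem:1viii}. The plan is to show \(a\in N_r(A)\) via \(x(ya)=y(xa)\), which should fall out of case (d). Then \autoref{lem:1}\ref{lem:1v} upgrades this to \(a\in N(A)\). Finally, case (d) in the first coordinate, after substituting everything, should be exactly \eqref{eq:inv_4}. The \(a^2\) term arises from \((y'a)(ya)\) using nuclearity and the \(*\)-property.

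\textbf{Converse.} Given \(a,b\) satisfying \eqref{eq:inv_a}--\eqref{eq:inv_4}, verify the four cases directly. \autoref{lem:1}\ref{lem:1vi} supplies that \(a\) is Cayley. \autoref{lem:1}\ref{lem:1ix} is not needed, since \(a^2\) is kept explicitly.

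\textbf{Norm and trace.} These are direct computations with the product rule: \((x,y)(x^*+ay^*,by)\) expands to the displayed formula, using \(ya=ay^*\), \((by)^*=y^*b^*\) and nuclearity of \(a\).

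The main obstacle is the bookkeeping in case (d), together with extracting the nuclearity of \(a\) and \(b\) with no associativity assumed on \(A\). For this step we would first specialize variables to \(1\) to isolate \(a\) and \(b\), and then feed the resulting identities into \autoref{lem:1}.
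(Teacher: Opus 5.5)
\paragraph{Verdict} Your overall plan is the paper's: read off \(\gamma(0,1)=(a,b)\), split anti-multiplicativity into the four basis cases (the same as the paper's specializations \(y=0\), \(v=0\), \(u=x=0\)), and feed the resulting identities into \autoref{lem:1}. Your remark that \((0,y)=(y,0)\cdot\ldots\) fails is wrong: \((y,0)(0,1)=(0,y)\) does hold. The paper uses it to get \(\gamma(0,y)=(a,b)(y^*,0)=(ay^*,by)\) directly, and comparing this with your \((ya,by)\) shows at once that \(a\) is a \(*\)-element. The genuine problem is elsewhere: your derivation of skewness and of \(a=ba\) fails.

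\paragraph{Where it fails} The first coordinates of (b) and (c) read \((yx)a=(ya)x^*=x(ya)\), which is vacuous at \(x=1\). No use of these together with involutivity (\(ba=-a^*\), \(b\in N_l(A)\), \(b^2=1\)) can give skewness, as the following example shows. Take \(K=A=\mathbb{Z}[\epsilon]/(\epsilon^2)\) with the identity involution, \(\mu=1\), \(a=\epsilon\), \(b=-1\). Then \(\gamma(x,y)=(x+y\epsilon,-y)\) extends \(*\), is involutive, and satisfies cases (a)--(c) and the first coordinate of (d). Yet \(a^*=\epsilon\neq-\epsilon\) and \(ba\neq a\). The only failure is the second coordinate of (d), which is off by \(2yy'\epsilon\).

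\paragraph{The repair} Skewness has to be extracted from the second coordinate of (d), namely \((by')(ya)^*+(by)(y'a)=0\), and your plan never isolates this identity.
\begin{enumerate}
\item Pull \(b\) out using \(b\in N_l(A)\), which is literally the second coordinate of (b). Cancel it using \(b(bz)=z\). This gives \(y'(ya)^*=-y(y'a)\).
\item By the Cayley property and \autoref{lem:1}\ref{lem:1ii}, \(a\) is a \(*\)-element, hence so is \(a^*\). The identity therefore reads \(y'(ya^*)=-y(ay'^*)\).
\item \autoref{lem:1}\ref{lem:1viii} now gives simultaneously that \(a\) is a skew \(*\)-element and that \(x(ya)=y(xa)\).
\item Only now does \(ba=-a^*\) yield \(a=ba\). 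Then \autoref{lem:1}\ref{lem:1vii} and \ref{lem:1v} give \(a\in N(A)\).
\end{enumerate}
This is exactly the paper's order of deduction, via its condition \eqref{eq:p-inv_4}. With this reordering the rest of your plan goes through: the first coordinate of (d) reduces to \eqref{eq:inv_4}, the converse follows via \autoref{lem:1}\ref{lem:1vi}, and the norm and trace are direct computations.
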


\begin{proof}
Let \(\gamma\) be an \(A\)-involution of \(\Cay(A,\mu)\) and suppose that \(
\gamma(0,1)=\left(a,b\right)
\) for some \(a,b\in A\). Since \((x,y)=(x,0)+(y,0)(0,1)\) for all \(x,y\in A\), and since \(\gamma\) is an anti-automorphism, we obtain
\[
\gamma(x,y)
= \gamma(x,0)+\gamma\left((y,0)(0,1)\right)
= \left(x^*,0\right)+\left(a,b\right)\left(y^*,0\right)
= \left(x^*+ay^*, by\right)
\]
for all \(x,y\in A\). Hence,
\begin{align*}
\gamma^2(x,y)
&=\gamma\left(x^*+ay^*,by\right)
=\left(\left(x^*+ay^*\right)^*+a(by)^*, b(by)\right)\\
&=\left(x+ya^*+a(by)^*, b(by)\right),
\end{align*}
so the condition \(\gamma^2(x,y)=(x,y)\) for all \(x,y\in A\) is equivalent to 
\begin{align}
    a(bx)^*&=-xa^*, \label{eq:p-inv_01}\\
    b(bx)&=x,\label{eq:p-inv_02}
\end{align}
for all \(x\in A\). 

Next, let \(u,v,x,y\in A\). On the one hand,
\begin{align*}
\gamma\left((u,v)(x,y)\right)
&=\gamma\left(ux+\mu y^*v, yu+vx^*\right)\\
&=\left(\left(ux+\mu y^*v\right)^*+a\left(yu+vx^*\right)^*, b\left(yu+vx^*\right)\right)\\
&=\left(x^*u^*+\mu v^*y+a\left(u^*y^*\right)+a\left(xv^*\right), b\left(yu\right)+b\left(vx^*\right)\right).
\end{align*}
On the other hand,
\begin{align*}
\gamma(x,y)\gamma(u,v)
&=(x^*+ay^*, by)(u^*+av^*, bv)\\
&=\left(x^*u^*+x^*(av^*)+(ay^*)u^*+(ay^*)(av^*)+\mu(v^*b^*)(by),\right.\\
&\phantom{=\ \ }\left.(bv)x^*+(bv)(ay^*)+(by)x+(by)(va^*)\right).
\end{align*}
By comparing components, the condition \(
\gamma((u,v)(x,y))=\gamma(x,y)\gamma(u,v)
\) for all \(u,v,x,y\in A\) is equivalent to the following two equalities:
\begin{align*}
\mu v^*y + a(u^*y^*) + a(xv^*)
&=x^*(av^*)+(ay^*)u^*+(ay^*)(av^*)+\mu(v^*b^*)(by),\\
b(yu)+b(vx^*)
&=(bv)x^*+(bv)(ay^*)+(by)u+(by)(va^*),
\end{align*}
for all \(u,v,x,y\in A\).

We claim that these two equalities are equivalent to 
\begin{align}
    a(xy)&=(ay)x=x^*(ay), \label{eq:p-inv_1}\\
    \mu xy&=(ay^*)(ax)+\mu(xb^*)(by), \label{eq:p-inv_2}\\
    b(xy)&=(bx)y, \label{eq:p-inv_3} \\
    (by)(xa^*)&=-(bx)(ay^*), \label{eq:p-inv_4}
\end{align}
for all \(x,y\in A\). Indeed, setting \(y=0\) and, separately, \(v=0\) in the first equality yields \eqref{eq:p-inv_1}. Setting \(u=x=0\) in the first equality yields \eqref{eq:p-inv_2}. Similarly, setting \(y=0\) in the second equality yields \eqref{eq:p-inv_3}, and setting \(u=x=0\) in the second equality yields \eqref{eq:p-inv_4}. Conversely, adding \eqref{eq:p-inv_1} and \eqref{eq:p-inv_2} recovers the first equality, and adding \eqref{eq:p-inv_3} and \eqref{eq:p-inv_4} recovers the second equality.

Now, we show that \eqref{eq:p-inv_01}--\eqref{eq:p-inv_4} are equivalent to \eqref{eq:inv_a}--\eqref{eq:inv_4}.
First, assume that \eqref{eq:p-inv_01}--\eqref{eq:p-inv_4} hold. We note that \eqref{eq:p-inv_3} says precisely that \(b\in N_l(A)\). Under this condition, \(b(bx)=x\) is equivalent to \(b^2=1\), so \eqref{eq:inv_b} holds. These two properties of \(b\) also imply that \eqref{eq:p-inv_4} is equivalent to \(y(xa^*)=-x(ay^*)\) for all \(x,y\in A\), which is equivalent to \(a\) being a skew \(*\)-element with \(x(ya)=y(xa)\) for all \(x,y\in A\) by \ref{lem:1viii} of \autoref{lem:1}, hence \(a\) is a skew \(*\)-element. Moreover, since \(a\) is also Cayley (by \eqref{eq:p-inv_1} and \ref{lem:1iii} of \autoref{lem:1}), \ref{lem:1vii} of \autoref{lem:1} implies that \(a\in N_r(A)\), and so, by \ref{lem:1v} of \autoref{lem:1}, \(a\in N(A)\). Now, to get \eqref{eq:inv_4}, it suffices to note that since \(a\in N(A)\) is Cayley, \((ay^*)(ax)=a(y^*(ax))=a((ax)y)=a^2(xy)\). Finally, setting \(x=1\) in \eqref{eq:p-inv_01} and using the fact that \(a\) is a skew \(*\)-element yields \eqref{eq:inv_3}.

Conversely, assume that \eqref{eq:inv_a}--\eqref{eq:inv_4} hold.
Then, \eqref{eq:p-inv_02} and \eqref{eq:p-inv_3} follow directly. Moreover, \eqref{eq:inv_a} implies that \(a\) is Cayley and in \(N(A)\) (by \ref{lem:1vi} of  \autoref{lem:1}), hence \eqref{eq:p-inv_1} holds, and \eqref{eq:inv_4} implies \eqref{eq:p-inv_2}. Combining \ref{lem:1vii} and \ref{lem:1viii} of \autoref{lem:1} also implies that \(y(xa^*)=-x(ay^*)\) for all \(x,y\in A\), which we have shown is equivalent to \eqref{eq:p-inv_4}. We see that \eqref{eq:p-inv_01} follows from \(a\) being Cayley, skew, and in \(N(A)\) (\ref{lem:1vii} of \autoref{lem:1}), and equality \eqref{eq:inv_3}: for all \(x\in A\), \(a(bx)^*=(bx)a=(xb)a=x(ba)=xa=-xa^*\).

Finally, given \(a,b\in A\) satisfying \eqref{eq:inv_a}--\eqref{eq:inv_4}, define \(\gamma\) by \eqref{eq:gamma_form}. Then \(\gamma\) is \(K\)-linear and, by the preceding equivalences, it is an \(A\)-involution of \(\Cay(A,\mu)\).
\end{proof}

\begin{corollary}\label{cor:alpha-beta-involutions}
Let \(A\) be a \(*\)-algebra and let \(\mu\in K\setminus\{0\}\). Then the maps \[\alpha,\beta\colon\Cay(A,\mu)\to\Cay(A,\mu)\] defined by
\[
\alpha(x,y)\colonequals(x^*,-y)\quad\text{and}\quad \beta(x,y)\colonequals(x^*,y)\qquad (x,y\in A)
\]
are \(A\)-involutions of \(\Cay(A,\mu)\). Moreover, if \(A\) has no zero divisors, then \(\alpha\) and \(\beta\) are the only \(A\)-involutions of \(\Cay(A,\mu)\).
\end{corollary}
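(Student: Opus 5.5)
We will deduce everything from \autoref{thm:inv}. For the first claim, note that \(\alpha\) and \(\beta\) have the form \eqref{eq:gamma_form} with \(a=0\) and \(b=-1\), respectively \(b=1\). It then suffices to verify \eqref{eq:inv_a}--\eqref{eq:inv_4}:
\begin{itemize}
\item \eqref{eq:inv_a}: \(a=0\) lies in \(N(A)\) and is trivially a skew \(*\)-element.
\item \eqref{eq:inv_b}: \(b=\pm1\) lies in \(N_l(A)\) and satisfies \(b^2=1\).
\item \eqref{eq:inv_3}: this reads \(0=b\cdot 0\).
\item \eqref{eq:inv_4}: since \(b^*=b=\pm1\), the right-hand side is \(\mu(\pm x)(\pm y)=\mu xy\).
\end{itemize}
Hence both maps are \(A\)-involutions.

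For the uniqueness claim, assume \(A\) has no zero divisors and let \(\gamma\) be an arbitrary \(A\)-involution. By \autoref{thm:inv}, \(\gamma\) is given by \eqref{eq:gamma_form} for some \(a,b\) satisfying \eqref{eq:inv_a}--\eqref{eq:inv_4}. The plan is first to show \(a=0\), and then to show \(b=\pm1\).

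For \(a=0\): by \ref{lem:1ix} of \autoref{lem:1}, the skew \(*\)-element \(a\) satisfies \(2a^2=0\). This only gives \(a^2=0\) under \(2\)-torsion-freeness, so instead I would argue directly. From \(a^*=-a\) and the \(*\)-element property with \(x=a\), we get \(a^2=a^*a=-a^2\). Here I expect the main obstacle, namely characteristic \(2\). When \(a^*=a\) (for instance in characteristic \(2\)), the \(*\)-property gives \(ax=x^*a\). I would then use \eqref{eq:inv_3} together with \(b^2=1\): rewrite \(b^2-1=(b-1)(b+1)\), which is valid since \(b\in N_l(A)\) and \(1\) commutes with \(b\). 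No zero divisors then gives \(b=1\) or \(b=-1\). If \(b=-1\) and \(2\ne0\) on \(A\), then \(a=-a\) forces \(2a=0\), and no zero divisors applied to \(2\cdot1\) gives \(a=0\). In the remaining cases (\(b=1\), or \(2=0\)), I would use \eqref{eq:inv_4} with \(x=y=1\): \(\mu=a^2+\mu b^*b\). Since \(b^*=b^{-1}\)-type identities hold here (\(b=\pm1\) is fixed by \(*\)), this reduces to \(a^2=0\). No zero divisors then yields \(a=0\).

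With \(a=0\) and \(b=\pm1\), \(\gamma\) equals \(\beta\) or \(\alpha\). I regard the characteristic-\(2\) bookkeeping, and establishing \(b^*=b\) before using \eqref{eq:inv_4}, as the delicate points. The key order is: first get \(b=\pm1\) from the factorization, then \(a^2=0\) from \eqref{eq:inv_4} at \(x=y=1\), then \(a=0\).
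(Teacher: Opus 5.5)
Your proof is correct and follows the same route as the paper: factor $b^2-1=(b-1)(b+1)$ to get $b=\pm1$, hence $b^*=b$ and $b^*b=1$, then set $x=y=1$ in \eqref{eq:inv_4} to get $a^2=0$, so $a=0$ because there are no zero divisors. The preliminary case split ($b=-1$ with $2\neq 0$) and the characteristic-$2$ discussion are harmless but unnecessary, since the argument via \eqref{eq:inv_4} already covers every case.
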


\begin{proof}
    The maps \(\alpha\) and \(\beta\) are \(A\)-involutions of \(\Cay(A,\mu)\) since \(a=0\) and \(b=\pm 1\) satisfy \eqref{eq:inv_a}--\eqref{eq:inv_4}.
    Now, suppose that \(A\) has no zero divisors. From \autoref{thm:inv}, \(b^2=1\), so \((b-1)(b+1)=0\), implying that \(b=\pm 1\). It follows that \(b^*=b\) and that \(b^*b=1\), hence setting \(x=y=1\) in \eqref{eq:inv_4} yields \(\mu=a^2+\mu\), or equivalently \(a^2=0\). Since A has no zero divisors, it has no nonzero nilpotent elements; hence \(a=0\). 
\end{proof}

\begin{remark}
    Note that \(\alpha=\beta\) if and only if \(2A=0\).
\end{remark}

In the following example, we determine all involutions of the split octonion algebra extending the standard involution of the split quaternion algebra.

\begin{example}
Let \(K\) be a field and let \(A=M_2(K)\), the split quaternion algebra over \(K\). Its standard involution is the matrix adjugate (see, e.g., \cite[Chapter 1.8]{SV00}).

We determine all \(A\)-involutions of \(\Cay(A,1)\), the split octonion algebra. Let \(\gamma\) be such an involution and let \(a,b\in A\) be as in \autoref{thm:inv}. Since \(A\) is associative, \ref{lem:1i} and \ref{lem:1vi} of \autoref{lem:1} imply that \(a\) is Cayley. Hence, by \ref{lem:1vii} of \autoref{lem:1}, we have \([A,A]a=0\). Now,
\[
\begin{pmatrix}
-1 & 1\\
0 & 1
\end{pmatrix}
=
\begin{pmatrix}
1 & 0\\
1 & 0
\end{pmatrix}
\begin{pmatrix}
0 & 1\\
0 & 0
\end{pmatrix}
-
\begin{pmatrix}
0 & 1\\
0 & 0
\end{pmatrix}
\begin{pmatrix}
1 & 0\\
1 & 0
\end{pmatrix},
\]
and the matrix on the right-hand side belongs to \([A,A]\), while the matrix on the left-hand side is invertible. Hence \(a=0\), and consequently
\(
\gamma(x,y)=(x^*,by)\)
for all \(x,y\in A\), where \(b^2=b^*b=1\). In particular, \(b=b^*bb=b^*\). Writing
\[
b=
\begin{pmatrix}
b_{11} & b_{12}\\
b_{21} & b_{22}
\end{pmatrix}\quad\text{and}\quad
b^*=
\begin{pmatrix}
b_{22} & -b_{12}\\
-b_{21} & b_{11}
\end{pmatrix},
\]
we have that \(b=b^*\) if and only if \(b_{11}=b_{22}\) and \(2b_{12}=2b_{21}=0\). Therefore,
\[
b^2=
\begin{pmatrix}
b_{11}^2+b_{12}b_{21} & 0\\
0 & b_{11}^2+b_{12}b_{21}
\end{pmatrix},
\]
so \(b^2=1\) if and only if \(b_{11}^2+b_{12}b_{21}=1\).

If \(\character K\neq2\), then \(b_{12}=b_{21}=0\) and \(b_{11}=b_{22}=\pm1\). Hence \(\gamma\) is either \(\alpha\) or \(\beta\). If \(\character K=2\), however, any \(b_{11},b_{12},b_{21}\in K\) satisfying \(b_{11}^2+b_{12}b_{21}=1\) yields such an involution. In particular, additional involutions beside \(\alpha\) and \(\beta\) occur in characteristic \(2\).
\end{example}

The following proposition shows that Conditions \eqref{eq:inv_b} and \eqref{eq:inv_4} can be replaced by simpler ones under mild hypotheses. 

\begin{proposition}\label{prop:inv-2tf}
Let \(A\) be a \(2\)-torsion-free \(*\)-algebra, \(\mu\in K\setminus\{0\}\) be cancellable, and suppose that \(a,b\in A\) satisfy \eqref{eq:inv_a} and \eqref{eq:inv_3}. Then \eqref{eq:inv_b} and \eqref{eq:inv_4} are equivalent to the following conditions:
\begin{equation}
b\in N(A), \quad b^2=1,\quad \text{and
}\quad b^*=b.\label{eq:inv_2tf_2}
\end{equation}
\end{proposition}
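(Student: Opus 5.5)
The plan is to first simplify \eqref{eq:inv_4} using the hypotheses on \(a\), and then compare the two sets of conditions on \(b\) directly. Since \(a\) is a skew \(*\)-element by \eqref{eq:inv_a} and \(A\) is \(2\)-torsion-free, \ref{lem:1ix} of \autoref{lem:1} gives \(a^2=0\). Hence \eqref{eq:inv_4} reads \(\mu xy=\mu(xb^*)(by)\) for all \(x,y\in A\). Because \(\mu\) is cancellable, this is equivalent to
\[
xy=(xb^*)(by)\qquad\text{for all }x,y\in A. \tag{\(\dagger\)}
\]
So it suffices to show that \eqref{eq:inv_b} together with \((\dagger)\) is equivalent to \eqref{eq:inv_2tf_2}. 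In this reduced form, \eqref{eq:inv_3} plays no role.

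For the forward direction, assume \(b\in N_l(A)\), \(b^2=1\), and \((\dagger)\).
\begin{enumerate}[label=(\arabic*)]
\item Put \(x=1\), \(y=b\) in \((\dagger)\). This gives \(b=b^*(bb)=b^*b^2=b^*\), so \(b^*=b\).
\item Applying \(*\) to an associator gives \((x,y,z)^*=-(z^*,y^*,x^*)\). Since \(b=b^*\) and \(b\in N_l(A)\), it follows that \(b\in N_r(A)\).
\item For the middle nucleus, substitute \(x\mapsto xb\) in \((\dagger)\), now written as \(xy=(xb)(by)\). This gives \((xb)y=((xb)b)(by)\).
\item Since \(b\in N_r(A)\), we have \((xb)b=x(bb)=x\). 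Therefore \((xb)y=x(by)\), so \(b\in N_m(A)\) and hence \(b\in N(A)\).
\end{enumerate}

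For the converse, assume \(b\in N(A)\), \(b^2=1\), and \(b^*=b\). Then \(b\in N_l(A)\) gives \eqref{eq:inv_b}. Using \(b\in N_r(A)\) and \(b\in N_m(A)\) we compute
\[
(xb^*)(by)=(xb)(by)=((xb)b)y=(x b^2)y=xy,
\]
which is \((\dagger)\). Multiplying by \(\mu\) and adding \(a^2(xy)=0\) recovers \eqref{eq:inv_4}.

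The only step requiring care is the forward deduction of \(b\in N_m(A)\). My approach is first to obtain \(b^*=b\) from a well-chosen substitution, then transfer left-nuclearity to right-nuclearity via the involution, and only then use the substitution \(x\mapsto xb\) to get middle-nuclearity.
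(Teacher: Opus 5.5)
Your proof is correct and follows the paper's argument essentially step for step. You reduce \eqref{eq:inv_4} to \(xy=(xb^*)(by)\) using \(a^2=0\) and cancellability of \(\mu\), obtain \(b^*=b\) from \(x=1\), \(y=b\), pass from \(N_l(A)\) to \(N_r(A)\), and then get \(b\in N_m(A)\) by substituting \(x\mapsto xb\); your identity \((x,y,z)^*=-(z^*,y^*,x^*)\) just makes explicit the step the paper covers with ``in particular,'' and you write out the converse that the paper calls straightforward.
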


\begin{proof}
    Assume first that \eqref{eq:inv_a}--\eqref{eq:inv_4} hold. 
    We show that under our additional assumptions, \(b\in N(A)\) and \(b=b^*\). First, \(a\) being a skew \(*\)-element implies that \(a^2=0\) (see \ref{lem:1ix} of \autoref{lem:1}). Using in addition the cancellability of \(\mu\), \eqref{eq:inv_4} is equivalent to
    \begin{equation}\label{eq:inv_2tf_4}
        xy=(xb^*)(by)\qquad\text{for all }x,y\in A.
    \end{equation}
     Setting \(x=1\) and \(y=b\) in this equality yields \(b=b^*b^2=b^*\). In particular, it follows that \(b\) is in the right nucleus too. Finally, setting \(x=xb\) in \eqref{eq:inv_2tf_4} yields that \(b\) is in the middle nucleus. 
     The converse is straightforward since, when \eqref{eq:inv_2tf_2} holds, \eqref{eq:inv_2tf_4} is trivial and \(a^2=0\) by \ref{lem:1ix} of \autoref{lem:1}.
\end{proof}

The next example describes the smallest generic \(*\)-algebras over \(K\) generated by \(a\) and \(b\) satisfying \eqref{eq:inv_a}, \eqref{eq:inv_3}, and \eqref{eq:inv_2tf_2};
as \(K\)-modules, they are free of rank \(3\).

\begin{example}
Let \(A\) be a \(2\)-torsion-free \(*\)-algebra and let \(\mu\in K\setminus\{0\}\) be cancellable. Suppose that \(a,b \in A\) satisfy \eqref{eq:inv_a}, \eqref{eq:inv_3}, and \eqref{eq:inv_2tf_2}. 
Since \(a,b \in N(A)\), the \(*\)-subalgebra \(B\) of \(A\) generated by \(a\) and \(b\) is associative. Moreover, \eqref{eq:inv_a}, \eqref{eq:inv_3}, and \eqref{eq:inv_2tf_2} imply the following equalities:
\[a^2=0,\quad a=ab,\quad ab=ba,\quad b^2=1,\quad a^*=-a,\quad b^*=b.\]
Hence \(B\) is a quotient of \(K[a,b]/(a^2,a-ab,b^2-1)\), 
where the involution is determined by \(a^*=-a\) and \(b^*=b\). As a \(K\)-module, \(B\) has rank at most \(3\) since 
\(K[a,b]/(a^2,a-ab,b^2-1)\cong K\oplus Ka\oplus Kb\). 
\end{example}

Motivated by \autoref{thm:inv}, we introduce the notation \(S(A)\) for the set of all skew \(*\)-elements of \(A\), where \(A\) is a \(*\)-algebra. 

\begin{proposition}\label{prop:extension-I(A)}
    Let \(A\) be a \(*\)-algebra and suppose that \(\mu\in K\setminus\{0\}\) is cancellable. If \(S(A) = 0\), then \(S(\Cay_\gamma(A,\mu)) = 0\)
    for all \(A\)-involutions \(\gamma\) of \(\Cay(A,\mu)\). 
\end{proposition}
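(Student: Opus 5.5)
The plan is to use \autoref{thm:inv} to pin down the shape of \(\gamma\), and then to show that both components of a skew \(*\)-element of \(\Cay_\gamma(A,\mu)\) are skew \(*\)-elements of \(A\). The hypothesis \(S(A)=0\) then kills both components.

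\textbf{Step 1: simplify \(\gamma\).} Write \(\gamma(x,y)=(x^*+ay^*,by)\) with \(a,b\) as in \autoref{thm:inv}. By \eqref{eq:inv_a}, \(a\) is a skew \(*\)-element of \(A\), so \(a\in S(A)=0\). Hence \(\gamma(x,y)=(x^*,by)\), where \(b\in N_l(A)\) and \(b^2=1\). With \(a=0\) and \(\mu\) cancellable, \eqref{eq:inv_4} becomes
\[
xy=(xb^*)(by)\qquad\text{for all }x,y\in A.
\]

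\textbf{Step 2: split the skew \(*\)-condition.} Let \((p,q)\) be a skew \(*\)-element of \(\Cay_\gamma(A,\mu)\). Skewness gives \(p^*=-p\) and \(bq=-q\). The \(*\)-condition \((p,q)(x,y)=\gamma(x,y)(p,q)\), written out with the multiplication rule, reads
\[
(px+\mu y^*q,\ qx^*+yp)=(x^*p+\mu q^*(by),\ (by)p^*+qx).
\]
Setting \(y=0\) in the first component gives \(px=x^*p\) for all \(x\). Together with \(p^*=-p\), this makes \(p\in S(A)\), so \(p=0\). Setting \(x=0\) in the first component and cancelling \(\mu\) gives
\[
y^*q=q^*(by)\qquad\text{for all }y\in A.
\]

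\textbf{Step 3: show \(q\in S(A)\).} This is the main obstacle, because \(b\) need not be \(\pm1\) and must be eliminated. I would substitute \(x=q^*\) into the simplified \eqref{eq:inv_4}. Since \(q^*b^*=(bq)^*=-q^*\), this yields \(q^*z=-q^*(bz)\) for all \(z\). Taking \(z=by\) and using \(b\in N_l(A)\) with \(b^2=1\), so that \(b(by)=y\), gives \(q^*(by)=-q^*y\). Combined with the identity from Step 2, this gives
\[
y^*q=-q^*y\qquad\text{for all }y\in A.
\]
Setting \(y=1\) gives \(q^*=-q\). Substituting back then gives \(qy=y^*q\) for all \(y\). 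Thus \(q\) is a skew \(*\)-element of \(A\), so \(q=0\) and \((p,q)=0\), as required.
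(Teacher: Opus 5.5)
Your proof is correct, but it takes a more computational route than the paper's.

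\textbf{Your route.} You work directly with the defining identity \((p,q)(x,y)=\gamma(x,y)(p,q)\). Because \(\gamma(x,y)\) multiplies \((p,q)\) from the left, \(b\) enters the first component through the term \(\mu q^*(by)\). You then eliminate it using three ingredients: the cancelled form \(xy=(xb^*)(by)\) of \eqref{eq:inv_4}, the relation \(bq=-q\) coming from skewness, and \(b\in N_l(A)\) with \(b^2=1\).

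\textbf{The paper's route.} The paper instead applies \ref{lem:1i} of \autoref{lem:1} inside \(\Cay_\gamma(A,\mu)\): \((u,v)\) is a skew \(*\)-element if and only if \(\gamma((u,v)(x,y))=-(u,v)(x,y)\) for all \((x,y)\). Since \(a=0\), the first component of \(\gamma\) is just \(*\) applied to the first component of the product. This gives \((ux+\mu y^*v)^*=-(ux+\mu y^*v)\). Setting \(y=0\), and separately \(x=0\) and cancelling \(\mu\), yields \((ux)^*=-ux\) and \((y^*v)^*=-y^*v\) for all \(x,y\). Then \ref{lem:1i} of \autoref{lem:1}, applied in \(A\), puts \(u\) and \(v\) in \(S(A)\) at once.

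\textbf{Comparison.} The paper's argument never touches \(b\) and needs only one cancellation of \(\mu\). Yours avoids \autoref{lem:1}, but it relies on the full constraints \eqref{eq:inv_b} and \eqref{eq:inv_4} on \(b\).

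A small slip: the second component of \(\gamma(x,y)(p,q)\) should be \((by)p^*+qx^*\), not \((by)p^*+qx\). This is harmless, since you only use the first component.
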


\begin{proof}
    Let \(\gamma\) be an \(A\)-involution of \(\Cay(A,\mu)\). It follows from \autoref{thm:inv} that there exist a skew \(*\)-element \(a\in A\) and an element \(b\in A\) such that 
    \[
    \gamma(x,y)=(x^*+ay^*,by)
            \qquad \text{for all } x,y\in A. 
    \]
    Since \(S(A)=0\), we have \(a=0\). 
    Let \((u,v)\in S(\Cay_\gamma(A,\mu))\) be arbitrary. We need to show that \(u=v=0\). For all \((x,y)\in \Cay(A,\mu)\), we have \(\gamma\left((u,v)(x,y)\right) = -(u,v)(x,y)\), which implies
    \[
	\gamma(ux+\mu y^*v,vx^*+y^*u) = -(ux+\mu y^*v,vx^*+y^*u)\qquad\text{for all }u,v,x,y\in A.
    \]
    Using the above formula for \(\gamma\), we obtain 
    \[
	\left((ux+\mu y^*v)^*,b(vx^*+y^*u)\right) = -(ux+\mu y^*v,vx^*+y^*u)\qquad\text{for all }u,v,x,y\in A.
    \]
    Setting \(y=0\) yields \(\left((ux)^*,b(vx^*)\right)=\left(-(ux),-vx^*\right)\). In particular, \(u\) is a skew \(*\)-element of \(A\), hence \(u =0\). Similarly, setting \(x=0\) we have \(\left(\mu( y^*v)^*,b(y^*u)\right) = (-\mu y^*v,-y^*u)\). Since \(\mu\) is cancellable, we conclude that \(v\in S(A)\), hence \(v=0\), which completes the proof. 
\end{proof}

An immediate consequence of \autoref{prop:extension-I(A)} is that for a \(*\)-algebra \(A\) satisfying \(S(A) = 0\), the Cayley--Dickson construction starting with \(A\) always produces \(*\)-algebras \(B\) satisfying \(S(B) = 0\) provided that all the \(\mu\)'s used in the process are cancellable. This holds, for instance, if we start with a field \(K\) (see, e.g., \autoref{prop:cay-over-R} in \autoref{sec:*-isomorphisms}).

\subsection{Nuclear, commuting, central, and scalar involutions}
If \(A\) is a \(*\)-algebra, then \(*\) is called \emph{nuclear}, \emph{commuting}, \emph{central} or \emph{scalar} if all norms and all traces lie in \(N(A)\), \(C(A)\), \(Z(A)\) or \(K\), respectively. Since \(A\) is assumed to be unital, all traces are built from norms: \[x+x^*=x1^*+1x^*=(x+1)(x+1)^*-xx^*-1\cdot 1^*\qquad\text{for all }x\in A.\] In particular, if all norms lie in \(N(A)\), \(C(A)\), \(Z(A)\) or \(K\), respectively, then so do all traces.

In \cite[(viii)--(xii) of Theorem 6.8]{McC85}, McCrimmon described the nuclei, commuter, and center of \(\Cay(A,\mu)\), where \(A\) is a \(*\)-algebra and \(\mu\in K\setminus\{0\}\) is cancellable. The relevant arguments, however, essentially carry over without the cancellability assumption. The next proposition records the resulting slightly more general statements, following McCrimmon's arguments. Following \cite{McC85}, for any \(x\in A\), we write \(\Sk(x)\colonequals x-x^*\). With this notation, we obtain the following two propositions: 

\begin{proposition}\label{prop:ncz}
    Let \(A\) be a \(*\)-algebra and let \(\mu\in K\setminus\{0\}\). Then the following hold:
    \begin{enumerate}[label={(\roman*)}]        
        \item \(\begin{aligned}[t]
             N_l(\Cay(A,\mu))&=\{(u,v)\ | \ u\in Z(A),\, \mu v\in N_m(A)\cap C(A),\\  &\phantom{=\ \ }\text{ with } \, v(xy)=(vy)x \, \text{ for all }\, x,y\in A \}.
         \end{aligned}\)\label{it:ncz-nl}
         
        \item \(\begin{aligned}[t]
            N_m(\Cay(A,\mu)&=\{ (u,v) \ | \ u\in N_m(A)\cap C(A), \, \mu v\in C(A), \\
            &\phantom{=\ \ }\text{ with } \, (vx)y=(vy)x, \, \mu y(xv)=\mu x(yv) \, \text{ for all } x,y\in A\}.
        \end{aligned}\)\label{it:ncz-nm}

        \item \(\begin{aligned}[t]
                N_r(\Cay(A,\mu))&=\{ (u,v)\ | \ u\in Z(A),\, \mu v\in N_m(A)\cap C(A), \\
                &\phantom{=\ \ }\text{ with} \, (xy)v=y(xv) \, \text{ for all }\, x,y\in A \}.
            \end{aligned}\)\label{it:ncz-nr}
            
        \item \(\begin{aligned}[t]
            N(\Cay(A,\mu))&=\{(u,v)\ | \ u, \mu v\in Z(A), \, v\in N_l(A), \\
            &\phantom{=\ \ }\text{ with }\, v[A,A]=0 \text{ and } (xy)v=y(xv) \text{ for all } x,y\in A\}.
        \end{aligned}\)\label{it:ncz-n}        

        \item \(\begin{aligned}[t]
            C(\Cay(A,\mu))&=\{(u,v)\ | \ u,\mu v\in C_*(A), \, \text{with }\, v\Sk(A)=0 \}.
        \end{aligned}\)\label{it:ncz-c}
        
        \item \(\begin{aligned}[t]
            Z(\Cay(A,\mu))&=\{(u,v)\ |\ u,\mu v \in Z_*(A),\, v\in N_l(A),\\ 
            &\phantom{=\ \ }\text{ with }\, v\Sk(A)=0 \text{ and } (xy)v=y(xv) \text{ for all } x,y\in A\}.
        \end{aligned}\)\label{it:ncz-z}
    \end{enumerate}
\end{proposition}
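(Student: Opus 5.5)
The plan is a direct computation of associators and commutators in \(\Cay(A,\mu)\), followed by splitting into components. Throughout we use the decomposition \((x,y)=(x,0)+(y,0)(0,1)\). Since all the conditions are \(K\)-linear in the test elements, it suffices to test membership in each nucleus against elements of the form \((x,0)\) and \((0,y)\).

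For \ref{it:ncz-nl}, we write \(X=(u,v)\). We compute \((X,P,Q)\) for the four choices \(P,Q\in\{(x,0),(0,y)\}\), using the multiplication rule \((u,v)(x,y)=(ux+\mu y^*v,vx^*+yu)\). Each associator is a pair whose first and second components must vanish separately. Because the terms split cleanly, each of the eight resulting identities involves either only \(u\) or only \(v\). The \(u\)-identities should be:
\begin{itemize}
\item \((u,x,x')=0\), from \(P=(x,0)\), \(Q=(x',0)\);
\item commutation of \(u\) with everything;
\item \(u\in N_m(A)\) and \(u\in N_r(A)\), from the mixed choices.
\end{itemize}
Together these give \(u\in Z(A)\). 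The \(v\)-identities come out as \(v(xy)=(vy)x\), together with identities carrying a factor of \(\mu\) (from the \(\mu y^*v\) terms), which amount to \(\mu v\in C(A)\cap N_m(A)\).

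Parts \ref{it:ncz-nm} and \ref{it:ncz-nr} are handled by the same four-case expansion, with \(X\) in the middle or right slot. As a shortcut for \ref{it:ncz-nr}, one might transport \ref{it:ncz-nl} using the standard involution \(\alpha\) of \autoref{cor:alpha-beta-involutions}. Since an anti-automorphism interchanges \(N_l\) and \(N_r\), we get \(N_r(\Cay)=\alpha(N_l(\Cay))\). Applying \(\alpha(u,v)=(u^*,-v)\) to the description in \ref{it:ncz-nl} requires checking two things. First, \(Z(A)\) is \(*\)-stable. Second, the condition \(v(xy)=(vy)x\) transforms into \((xy)v=y(xv)\) after applying \(*\) and substituting \(x^*,y^*\). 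This transformation needs care because \(v\) itself is not mapped to \(v^*\); only the product conditions get starred. I expect instead to do \ref{it:ncz-nr} by direct computation to be safe.

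Part \ref{it:ncz-n} is the intersection of the first three parts. It must then be simplified using \ref{lem:1vii} and \ref{lem:1vii'} of \autoref{lem:1}:
\begin{itemize}
\item \(v(xy)=(vy)x\) combined with \((vx)y=(vy)x\) gives \(v\in N_l(A)\) and \(v[A,A]=0\);
\item \(\mu v\in N_m(A)\cap C(A)\) together with \(v\in N_l(A)\) should upgrade to \(\mu v\in Z(A)\), the right-nucleus part coming from \((xy)v=y(xv)\);
\item the condition \(\mu y(xv)=\mu x(yv)\) is absorbed by \ref{lem:1vii}.
\end{itemize}

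For \ref{it:ncz-c}, we compute \([(u,v),(x,0)]\) and \([(u,v),(0,y)]\). This gives:
\begin{itemize}
\item \(u\in C(A)\), together with \(vx^*=vx\), i.e.\ \(v\Sk(A)=0\);
\item \(\mu y^*v=\mu vy\) and \(yu=yu^*\).
\end{itemize}
Taking \(y=1\) gives \(u=u^*\) and \(\mu v\) symmetric-commuting. The condition \(\mu y^*v=\mu v y\) combined with \(v\Sk(A)=0\) (so that \(vy^*=vy\)) yields \(\mu v\in C(A)\). Applying it at \(y=1\) and then \(*\) gives \(\mu v\in C_*(A)\). Part \ref{it:ncz-z} is then the intersection of \ref{it:ncz-n} and \ref{it:ncz-c}.

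The main obstacle is bookkeeping rather than ideas. The delicate points are the places where the factor \(\mu\) cannot be cancelled, so that conditions must be stated on \(\mu v\) rather than on \(v\). A second delicate point is the simplification of the intersections in \ref{it:ncz-n} and \ref{it:ncz-z} into the stated compact form, where the equivalences in \autoref{lem:1} must be applied in the right direction.
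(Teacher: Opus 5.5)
Your plan breaks down at part (iii), and the problem carries over to (iv) and (vi).

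First, the \(\alpha\)-shortcut does not do what you claim. The map \(\alpha(u,v)=(u^*,-v)\) never stars \(v\). The set in (i) is stable under \(u\mapsto u^*\) and \(v\mapsto -v\), so transporting (i) gives \(N_r(\Cay(A,\mu))=\alpha(N_l(\Cay(A,\mu)))=N_l(\Cay(A,\mu))\), with the \emph{same} condition \(v(xy)=(vy)x\). Starring that identity gives \((xy)v^*=y(xv^*)\), which is a condition on \(v^*\), not \((xy)v=y(xv)\).

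The direct expansion you fall back on says the same thing. Put \(X=(u,v)\) in the right slot with \(P=(x,0)\), \(Q=(x',0)\). Then \((PQ)X=((xx')u,\,v(xx'))\) and \(P(QX)=(x(x'u),\,(vx')x)\), so \(v(xx')=(vx')x\) is forced. The expression \((xy)v-y(xv)\) appears only in the case \(P=(0,y)\), \(Q=(0,y')\), and only multiplied by \(\mu\): the second component there is \(\mu\bigl(v(y'^*y)-y(y'^*v)\bigr)\). For \(v\in N_m(A)\cap C(A)\) the conditions \(v(xy)=(vy)x\) and \((xy)v=y(xv)\) are equivalent. That holds when \(\mu\) is cancellable, which is the hypothesis of McCrimmon's result that the paper's proof cites for (i)--(iii). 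It fails in general.

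For a concrete counterexample, let \(K=\mathbb{Z}\), \(\mu=2\), and let \(A\) be the upper triangular \(2\times 2\) matrices over \(\mathbb{F}_2\) with \(\begin{pmatrix}p&q\\0&r\end{pmatrix}^*=\begin{pmatrix}r&q\\0&p\end{pmatrix}\). Then \(\mu A=0\), so \((u,v)(x,y)=(ux,vx^*+yu)\), and \([A,A]=\mathbb{F}_2e_{12}\), where \(e_{ij}\) are the matrix units.
\begin{enumerate}
\item The element \((0,e_{11})\) satisfies every condition listed in (iii), because \(e_{12}e_{11}=0\). Yet \(\bigl((e_{11},0),(e_{12},0),(0,e_{11})\bigr)=(0,e_{12})\neq0\).
\item Every associator with \((0,w)\) in one slot has the form \((0,w[p,q])\) for some \(p,q\in A\), and \(e_{22}[A,A]=0\). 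Hence \((0,e_{22})\in N(\Cay(A,2))\). But \(e_{12}e_{22}\neq0\) violates the condition \((xy)v=y(xv)\) required in (iv).
\end{enumerate}

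So no amount of bookkeeping will prove (iii), (iv), (vi) as stated. You have two options.
\begin{enumerate}
\item Assume \(\mu\) is cancellable. Then \(\mu v\in N_m(A)\cap C(A)\) gives \(v\in N_m(A)\cap C(A)\), and the two conditions agree.
\item Take (iii) to be the set in (i), and drop \((xy)v=y(xv)\) from (iv) and (vi). In this version, the step of your (iv) sketch that obtains \(\mu v\in N_r(A)\) from \((xy)v=y(xv)\) must be replaced. It follows directly from \(c=\mu v\in N_l(A)\cap N_m(A)\cap C(A)\), since \((xy)c=c(xy)=(cx)y=(xc)y=x(cy)=x(yc)\).
\end{enumerate}
Your direct-computation treatment of (i), (ii), and (v) is sound.
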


\begin{proof}
\noindent\ref{it:ncz-nl}--\ref{it:ncz-nr}: These equalities follow from~\cite[(6.8)]{McC85}. \\

\noindent\ref{it:ncz-n}: Since the nucleus is the intersection of the left, middle, and right nuclei, \((u,v)\in N(\Cay(A,\mu))\) if and only if \(u\in Z(A), \ \mu v\in N_m(A)\cap C(A)\), and 
    \[
        v(xy)=(vy)x, \ (vx)y=(vy)x, \ \mu y(xv)=\mu x(yv),\ (xy)v=y(xv) \qquad \text{for all }x,y\in A.
    \]
    Now, note that the first two equalities are equivalent to \(v\in N_l(A)\) and \(v[A,A]=0\). Moreover, the third equality follows from the first equality and the fact that we have \(\mu v\in N_m(A)\cap C(A)\). Indeed, assuming that these relations hold yields
    \[\mu y(xv)=\mu y(vx)=\mu (yv)x=\mu (vy)x=\mu (vx)y=\mu x(yv)\qquad\text{for all }x,y\in A.\]

\noindent\ref{it:ncz-c}: This equality follows directly from~\cite[(6.7)]{McC85}.\\
    
\noindent\ref{it:ncz-z}: This equality follows by combining \ref{it:ncz-n} and \ref{it:ncz-c}, and noting that if \(v\in N_l(A)\) and \(v\Sk(A)=0\), then \(v[A,A]=0\); this since for all \(x,y\in A\),
    \[        v(xy)=(vx)y=(vx^*)y=v(x^*y)^*=v(y^*x)=(vy^*)x=(vy)x=v(yx).\qedhere
    \]
\end{proof}

\begin{proposition}\label{prop:nuclear-inv}
    Let \(A\) be a \(*\)-algebra and let \(\mu\in K\setminus\{0\}\). A map 
    \[\gamma\colon \Cay(A,\mu)\to \Cay(A,\mu)\] is a nuclear, commuting, central or scalar \(A\)-involution of \(\Cay(A,\mu)\) if and only if it is of the form
    \[
    \gamma(x,y)=(x^*+ay^*,by)
    \qquad (x,y\in A),
    \]
    for some \(a,b\in A\) satisfying \eqref{eq:inv_a}--\eqref{eq:inv_4}, together with these corresponding conditions:
    \begin{enumerate}[label=(\roman*)]
        \item \textbf{Nuclear case.} The involution \( *\) is central and for all \(x\in A\),
        \[
        ax,\, \mu (bx)^*x,\, \mu (b+1)x \in Z(A) \quad \text{and} \quad (b+1)x\in N_l(A), 
        \]
        with \((b+1)[A,A]=0\) and \((uv)((b+1)x)=v(u(b+1)x)\) for all \(u,v\in A\).\label{it:nuclear-inv}

        \item \textbf{Commuting case.}
        The involution \(*\) is commuting and for all \(x\in A\),
        \[ax, \, \mu (bx)^*x,\, \mu(b+1)x\in C_*(A) \quad \text{and} \quad (b+1)A\Sk(A)=0. \]\label{it:commuting-inv}\vspace{-3ex}
        
        \item \textbf{Central case.} The involution \( * \) is central and for all \(x\in A\),
        \[
        ax,\, \mu (bx)^*x,\, \mu (b+1)x \in Z_*(A) \quad \text{and} \quad (b+1)x\in N_l(A),
        \]
        with \((b+1)A\Sk(A)=0\) and \((uv)((b+1)x)=v(u(b+1)x)\) for all \(u,v\in A\).\label{it:central-inv}
        
        \item \textbf{Scalar case.} The involution \(*\) is scalar and \(\gamma=\alpha\).\label{it:scalar-inv}
    \end{enumerate}
\end{proposition}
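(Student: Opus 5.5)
We reduce everything to \autoref{thm:inv} and \autoref{prop:ncz}. By \autoref{thm:inv}, an \(A\)-involution \(\gamma\) has the form \eqref{eq:gamma_form} with \(a,b\) satisfying \eqref{eq:inv_a}--\eqref{eq:inv_4}. Its norms are given by \eqref{eq:norm}. As noted before \autoref{prop:ncz}, traces are built from norms, so \(\gamma\) is nuclear (commuting, central, scalar) exactly when every norm \((x,y)\gamma(x,y)\) lies in \(N\), \(C\), \(Z\) of \(\Cay(A,\mu)\), or in \(K=K(1,0)\), respectively. The plan is to linearize the norm condition and read off componentwise conditions from the descriptions in \autoref{prop:ncz}.

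\textbf{Linearization.} Since \(N,C,Z\) and \(K\) are \(K\)-submodules, the condition on all norms is equivalent to the conditions on the three pieces \((x,0)\gamma(x,0)=(xx^*,0)\), \((0,y)\gamma(0,y)=(\mu(by)^*y,-y^2a)\), and the polarized cross term. For the cross term we use \((x,y)=(x,0)+(0,y)\) together with the trace \((x+x^*+ay^*,(b+1)y)\). More precisely, the membership conditions for norms of \((x,y)\) split, after polarizing in \(x\) and \(y\) separately, into the following.

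\begin{itemize}
\item \(xx^*\) lies in the corresponding subset of \(A\). This says that \(*\) is nuclear, commuting, central, or scalar. For the nuclear case the first coordinate must lie in \(Z(A)\) by \autoref{prop:ncz}\ref{it:ncz-n}, which explains why \(*\) is required to be central there.
\item The first-coordinate terms \(xay^*\) (equivalently \(ax\), using that \(a\) is a nuclear \(*\)-element, so \(xay^*=a(x^*y^*)\)) and \(\mu(by)^*y\) lie in \(Z(A)\), \(C_*(A)\), and so on.
\item The second coordinate \((b+1)yx-y^2a\) satisfies the \(v\)-conditions of \autoref{prop:ncz}. Since \(a\in N(A)\) is a skew \(*\)-element, the term \(y^2a\) should be absorbed. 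Setting \(x=1\) gives conditions on \((b+1)y\), and the general \(x\) case reduces to these.
\end{itemize}

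Each item of the proposition is then obtained by substituting \(u=\) first coordinate and \(v=\) second coordinate into \autoref{prop:ncz}\ref{it:ncz-n}, \ref{it:ncz-c}, \ref{it:ncz-z}. The conditions "\(\mu v\in Z(A)\), \(v\in N_l(A)\), \(v[A,A]=0\), \((uv)w=v(uw)\)" become the stated conditions with \(v=(b+1)x\). In the commuting case, \(v\Sk(A)=0\) for all \(v=(b+1)x\) gives \((b+1)A\Sk(A)=0\). The converse direction is the same computation read backwards: the conditions listed for generators \(ax\), \((b+1)x\) imply those for \((b+1)yx-y^2a\), because the relevant sets are submodules and are closed under the needed multiplications.

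\textbf{Scalar case.} This is handled separately. Here the second coordinate of every norm and trace must vanish, so \((b+1)y=0\) for all \(y\), giving \(b=-1\). The first coordinate of the trace must lie in \(K\), so \(ay^*\in K\) for all \(y\). In particular \(a\in K\) with \(a^*=-a\), and \(a=ba=-a\). Moreover, the norm of \((0,1)\) forces \(a\cdot 1^*\) terms; from \(ay^*\in K\) and \(a=-a\), together with \eqref{eq:inv_4} at \(x=y=1\) (which gives \(a^2=0\) once \(b^*b=1\)), we expect to conclude \(a=0\). This is the delicate point: I would try showing \(xay^*\) being scalar for all \(x\) forces \(a=0\) by taking \(y=1\), \(x\) arbitrary, so that \(Ka\)-multiples of \(x\) are scalar. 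If this fails in degenerate cases, use the trace condition on \((x,y)\) directly. Then \(\gamma=\alpha\), and conversely \(\alpha\) is scalar when \(*\) is, since its norm is \((xx^*-\mu y^*y,0)\) and its trace is \((x+x^*,0)\).

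\textbf{Main obstacle.} I expect the main obstacle to be the careful bookkeeping showing that the second-coordinate expression \((b+1)yx-y^2a\) satisfies the nucleus/center conditions of \autoref{prop:ncz} exactly when the stated conditions on \((b+1)x\) and \(ax\) hold. This requires repeatedly using that \(a\) is a nuclear Cayley skew element (\autoref{lem:1}) to move \(a\) around.
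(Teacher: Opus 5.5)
Your overall plan is the paper's own: substitute the norm formula \eqref{eq:norm} into \autoref{prop:ncz}, and split each norm into the pieces at \(y=0\), at \(x=0\), and the cross term \((xay^*,(b+1)yx)\). With the two refinements in the second paragraph, this handles the nuclear, commuting and central cases.

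The genuine gap is the scalar case: you never derive \(a=0\), and the ingredients you propose cannot force it. These are the trace condition, \(ay^*\in K\), \(a=ba=-a\), \(a^2=0\) from \eqref{eq:inv_4}, and \(xay^*\in K\). For a counterexample, take \(K=A=\mathbb{F}_2[\epsilon]/(\epsilon^2)\) with the identity involution, \(\mu=1\), \(a=\epsilon\) and \(b=1\).
\begin{itemize}
\item Then \eqref{eq:inv_a}--\eqref{eq:inv_4} hold, so \(\gamma(x,y)=(x+\epsilon y,y)\) is an \(A\)-involution, and \(*\) is scalar.
\item Every trace \((\epsilon y,0)\) lies in \(K\), \(xay^*=\epsilon xy\in K\), and \(2a=a^2=0\).
\item Yet \(a\neq 0\).
\end{itemize}
What rules this \(\gamma\) out is the norm: the norm of \((0,1)\) is \((1,\epsilon)\notin K(1,0)\). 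You stated the needed ingredient yourself (``the second coordinate of every norm must vanish'') but applied it only to the trace. By \eqref{eq:norm}, the second coordinate of the norm of \((0,1)\) is \(-a\), so \(a=0\) at once, and then \(x=y=1\) gives \(b=-1\). For the converse, you should also note that \(y^*y=y^*(y^*)^*\) is a norm and hence lies in \(K\).

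Two smaller points need an actual argument; ``closure'' does not justify them.
\begin{enumerate}
\item In the nuclear case, \ref{it:ncz-n} of \autoref{prop:ncz} applied to \(v=(b+1)x\) gives \(((b+1)x)[A,A]=0\) for all \(x\). The statement asks only for \((b+1)[A,A]=0\). Recovering the former from the latter needs a short computation that uses \(b+1\in N_l(A)\) to move \(x\) past commutators.
\item When absorbing \(-y^2a\), note that an element \(z\in Z(A)\) generally satisfies neither \(z[A,A]=0\) nor \((uv)z=v(uz)\); the two sides of the latter differ by \(z[u,v]\). So \(ax\in Z(A)\) alone does not suffice. You must use \(a[A,A]=[A,A]a=0\), which follows from \ref{lem:1vi} and \ref{lem:1vii} of \autoref{lem:1} because \(a\) is a nuclear \(*\)-element.
\item In the commuting case, use that \(a\in C(A)\) and that \(a\) is a \(*\)-element. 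Together these give \(ay=ay^*\), and hence \(y^2a\Sk(A)=0\).
\end{enumerate}
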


\begin{proof}
    As stated in \eqref{eq:norm} of \autoref{thm:autom}, the norm of \((x,y)\in \Cay(A,\mu)\) induced by \(\gamma\) is given by \[(x,y)\gamma(x,y)=\left(xx^*+xay^*+\mu(by)^*y,(b+1)yx-y^2a\right).\]
    We use \autoref{prop:ncz} to get the desired characterizations.

    \noindent \ref{it:nuclear-inv}: The involution $\gamma$ is nuclear if and only if \((x,y)\gamma(x,y)\in N(\Cay(A,\mu))\) for all \(x,y\in A\). By \ref{it:ncz-n} of \autoref{prop:ncz}, this is equivalent to the following conditions holding for all \(x,y\in A\):
    \begin{align*}
        xx^*+xay^*+\mu (by)^*y,\ \mu\left((b+1)yx-y^2a\right)&\in Z(A),\\
        (b+1)yx-y^2a&\in N_l(A),\\
        \left((b+1)yx-y^2a\right)[A,A]&=0,\\
        (uv)\left((b+1)yx-y^2a\right)&=v\left(u\left((b+1)yx-y^2a\right)\right), 
    \end{align*}
for all \(u,v\in A\). Now, by setting \(x=0\) and \(y=0\) separately, we see that these sums satisfy the required properties if and only if each of their terms does. Moreover, since \(a\in N(A)\) is a \(*\)-element, \(xay^*=ax^*y^*\in Z(A)\) for all \(x,y\in A\) if and only if \(ax\in Z(A)\) for all \(x\in A\). The latter condition implies, in particular, that \(\mu x^2a=\mu a(x^*)^2\in Z(A)\) for all \(x\in A\). Note also that $x\in Z(A)$ if and only if $x^*\in Z(A)$. Furthermore, \(a\) is a \(*\)-element in \(N(A)\), so by \ref{lem:1vii}--\ref{lem:1vii'} of \autoref{lem:1}, \([A,A]a=a[A,A]=0\), and so \(ax[A,A]=x^*a[A,A]=0\) for all \(x\in A\). Using in addition that \(ax\in Z(A)\) for all \(x\in A\), we get that, for all \(u,v\in A\),
    \[
    (uv)\left(x^2a\right)=((uv)a)x^2=((vu)a)x^2=(v(ua))x^2=v\left(u\left(ax^2\right)\right)=v\left(u\left(x^2a\right)\right).
    \]
    Hence \(\gamma\) is nuclear if and only if \(*\) is central and for all \(x\in A\), 
    \begin{align*}
        &ax,\ \mu (bx)^*x, \ \mu (b+1) x \in Z(A),\quad (b+1)x\in N_l(A),\\
    &(uv)((b+1)x)=v(u(b+1)x) \qquad\text{for all }u,v\in A,
    \end{align*}
    with \((b+1)A[A,A]=0\). 
    We only have left to show that \((b+1)[A,A]=0\) implies \((b+1)A[A,A]=0\). However, \(b+1\in N_l(A)\), so if \((b+1)[A,A]=0\), then for any \(x,y,z\in A\), 
    \begin{align*}
        ((b+1)x)(yz)&=(b+1)(x(yz))=(b+1)((yz)x)=((b+1)(yz))x\\
        &=((b+1)(zy))x=(b+1)((zy)x)=(b+1)(x(zy))=((b+1)x)(zy).
    \end{align*} 
    
    \noindent \ref{it:commuting-inv}: The involution \(\gamma\) is commuting if and only if \((x,y)\gamma(x,y)\in C(\Cay(A,\mu))\) for all \(x,y\in A\). By \ref{it:ncz-c} of \autoref{prop:ncz}, this is equivalent to the following conditions holding for all \(x,y\in A\):
    \begin{align*}    
    xx^*+xay^*+\mu(by)^*y, \ \mu\left((b+1)yx-y^2a\right)&\in C_*(A), \\
    \left((b+1)yx-y^2a\right)\Sk(A)&=0.
    \end{align*}
    Since \(a\in C(A)\) is a \(*\)-element, \(ay=y^*a=ay^*\) for all \(y\in A\). Using that \(a\in N(A)\), we have that for all \(x\in A\), \(x^2ay=x^2ay^*\), so \(x^2a\Sk(A)=0\). This and the remarks made in the proof of \ref{it:nuclear-inv} then yields the desired characterization.\\

    \noindent \ref{it:central-inv}: Here, it suffices to combine \ref{it:nuclear-inv} and \ref{it:commuting-inv}, and note that, as shown in the proof of \ref{it:ncz-z} of \autoref{prop:ncz}, \((b+1)[A,A]=0\) follows from \((b+1)A\Sk(A)=0\) and \(b+1\in N_l(A)\). \\

    \noindent\ref{it:scalar-inv}: Suppose that \(*\) is scalar and \(\gamma=\alpha\). Then \(a=0\) and \(b=-1\), so \((x,y)\gamma(x,y)=(xx^*-\mu y^*y,0)=\left(xx^*-\mu y^*(y^*)^*,0\right)\in K\) for all \(x,y\in A\). Now suppose instead that \(\gamma\) is scalar. Since \(\gamma\) extends \(*\), \(*\) is scalar. Moreover, \((b+1)yx-y^2a=0\) for all \(x,y\in A\). By setting \(x=0\) and \(y=1\), we have \(a=0\). By then setting \(x=y=1\), we have \(b=-1\), so \(\gamma=\alpha\).
\end{proof}

\section{Isomorphisms}\label{sec:isomorphisms} In \cite{Ea90}, Eakin and Sathaye gave a description of the automorphisms and derivations of the Cayley doubles \(A_n\) defined inductively by \(A_0 = K\), where \(K\) is a field of characteristic other than \(2\) or \(3\), \(A_{n+1} = \Cay_\alpha(A_n,\mu_n)\), where \(\mu_n\in K\setminus\{0\}\). They showed that the automorphism group of \(A_{n+1}\) is isomorphic to the direct product of the automorphism group of \(A_{n}\) and a finite group \(G\), which is either \(\mathbb{Z}/2\) or \(S_3\). In this section, we describe those automorphisms of the algebra \(\Cay(A,\mu)\) that extend the identity automorphism of \(A\). 

\subsection{Algebra isomorphisms} 
Let \(A\) be a \(*\)-algebra and let \(\mu_1,\mu_2\in K\setminus\{0\}\). We call an algebra isomorphism \[\phi\colon\Cay(A,\mu_1)\to \Cay(A,\mu_2)\] that extends the identity automorphism of \(A\) an \emph{\(A\)-isomorphism}. 
\begin{theorem}\label{thm:autom}
    Let \(A\) be a \(*\)-algebra and let \(\mu_1,\mu_2\in K\setminus\{0\}\). A map \[\phi\colon\Cay(A,\mu_1)\to \Cay(A,\mu_2)\] is an \(A\)-isomorphism if and only if it is of the form
    \begin{equation}\label{eq:autom_def}
        \phi(x,y)=(x+cy^*,dy) \qquad (x,y\in A),
    \end{equation}
    for some \(c,d\in A\) satisfying 
    \begin{align}
        c\in N(A) \quad &\text{and} \quad c \text{ is a skew \(*\)-element,} \label{eq:autom_1}\\
        d\in N_l(A) \quad &\text{and} \quad x\mapsto dx \text{ is bijective,} \label{eq:autom_2} \\
            \mu_1 xy&=c^2(xy)+\mu_2(xd^*)(dy) \qquad\text{for all \(x,y\in A\).}\label{eq:autom_3}
    \end{align}
\end{theorem}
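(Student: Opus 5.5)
We mirror the proof of \autoref{thm:inv}. Let \(\phi\) be an \(A\)-isomorphism and write \(\phi(0,1)=(c,d)\). Since \((x,y)=(x,0)+(y,0)(0,1)\) in \(\Cay(A,\mu_1)\) and \(\phi\) is multiplicative and fixes \((x,0)\), we get \(\phi(x,y)=(x,0)+(y,0)(c,d)\). In \(\Cay(A,\mu_2)\) this equals \((x+yc,\,dy^*)\). The form \eqref{eq:autom_def} comes out after replacing \(y\) by \(y^*\), that is, after writing \((x,y)=(x,0)+(0,1)(y^*,0)\), so that \(\phi(x,y)=(x+cy^*,dy)\). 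We then work with this form. Bijectivity of \(\phi\) is equivalent to bijectivity of \(y\mapsto dy\), since \(\phi\) is block triangular: \(\phi(x,y)=(x+cy^*,dy)\) can be inverted by first solving for \(y\) from \(dy\) and then setting \(x\) equal to the first component minus \(cy^*\).

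Next, expand the condition \(\phi((u,v)(x,y))=\phi(u,v)\phi(x,y)\) for all \(u,v,x,y\in A\). The left side is
\[
\bigl(ux+\mu_1y^*v+c(vx^*+yu)^*,\ d(vx^*+yu)\bigr).
\]
The right side is \((u+cv^*,dv)(x+cy^*,dy)\) computed with \(\mu_2\). Comparing components and specializing variables as in \autoref{thm:inv} (setting \(y=0\), then \(v=0\), then \(u=x=0\)) should split the condition into four identities:
\begin{align*}
&\text{(a) } c(xy)=(cy)x=x^*(cy),\qquad \text{(b) } \mu_1xy=(cy^*)(cx)+\mu_2(xd^*)(dy),\\
&\text{(c) } d(xy)=(dx)y,\qquad \text{(d) } a\text{ cross relation between } d \text{ and } c.
\end{align*}
Conversely, adding these four identities recovers the full multiplicativity condition. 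Here (a) says \(c\) is Cayley by \ref{lem:1iii} of \autoref{lem:1}, and (c) says \(d\in N_l(A)\).

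The remaining task is to derive \eqref{eq:autom_1} and \eqref{eq:autom_3}. Identity (d) should have the shape \((dy)(xc^*)=-(dx)(cy^*)\), analogous to \eqref{eq:p-inv_4}. This is the main obstacle. In \autoref{thm:inv} we cancelled \(b\) using \(b^2=1\); here we must instead use that \(d\in N_l(A)\) and \(x\mapsto dx\) is bijective.

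My plan for (d) is as follows. Setting \(x=1\) in (d) gives \(d(yc^*)=-(d)(cy^*)\). Because \(d\) is left nuclear, we get \(d(yc^*+cy^*)=0\), and injectivity of left multiplication by \(d\) then yields \(yc^*=-cy^*\). Taking \(y=1\) gives \(c^*=-c\), and the identity then makes \(c\) a \(*\)-element, hence a skew \(*\)-element. For general \(x\), rewrite (d) using the left-nucleus property \((dy)z=d(yz)\), and cancel \(d\) by injectivity to obtain \(y(xc^*)=-x(cy^*)\). By \ref{lem:1viii} of \autoref{lem:1} this gives \(x(yc)=y(xc)\). Since \(c\) is Cayley, \ref{lem:1vii} of \autoref{lem:1} gives \(c\in N_r(A)\), and then \ref{lem:1v} gives \(c\in N(A)\).

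Finally, as in \autoref{thm:inv}, \((cy^*)(cx)=c^2(xy)\) because \(c\) is nuclear and Cayley, which turns (b) into \eqref{eq:autom_3}. For the converse, assume \eqref{eq:autom_1}--\eqref{eq:autom_3}. Then \ref{lem:1vi} of \autoref{lem:1} makes \(c\) Cayley, which gives (a), and \ref{lem:1vii}--\ref{lem:1viii} give (d) after multiplying on the left by \(d\), so \(\phi\) is a bijective homomorphism.
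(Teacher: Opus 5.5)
Your proposal is correct and follows essentially the paper's route: you reduce bijectivity of \(\phi\) to that of \(y\mapsto dy\), split multiplicativity into four identities by specializing variables, and cancel \(d\) via \(d\in N_l(A)\) and injectivity so that (viii), (vii), and (v) of \autoref{lem:1} give \(c\in N(A)\); the only difference is that you use the factorization \((0,y)=(0,1)(y^*,0)\) to get \(\phi(x,y)=(x+cy^*,dy)\) directly, whereas the paper uses \((0,y)=(y,0)(0,1)\), obtains \((x+yc,dy)\), and converts only after showing \(c\) is a \(*\)-element. One slip to fix: \((y,0)(c,d)=(yc,dy)\), not \((yc,dy^*)\), and your stated form does not come from ``replacing \(y\) by \(y^*\)'' but from the second factorization, which is valid on its own (comparing the two even gives \(yc=cy^*\) immediately).
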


\begin{proof}
    Let \(\phi\colon\Cay(A,\mu_1)\to\Cay(A,\mu_2)\) be an \(A\)-isomorphism and let \(c,d\in A\) be such that \(\phi(0,1)=(c,d)\). Then, by using that \(\phi\) is an \(A\)-automorphism together with the relation \((0,y)=(y,0)(0,1)\), we have 
    \[
        \phi(x,y)=\phi(x,0)+\phi\left((y,0)(0,1)\right)=(x,0)+(y,0)(c,d)=(x+yc,dy),
    \]
    for all \(x,y\in A\). Now, let us define a map \(\phi_2\colon A\to A\) by \(\phi_2(x)\colonequals dx\) (\(x\in A\)) and show that \(\phi\) is bijective if and only if \(\phi_2\) is bijective. The direct implication is straightforward. To show the converse implication, assume that  \(\phi_2\) is bijective. Then, on the one hand,
    \[
        \phi(x,y)=(0,0)\iff x+yc=0 \text{ and } \phi_2(y)=0 \iff x=y=0,
    \]
    so \(\phi\) is injective. On the other hand, the surjectivity of \(\phi_2\) implies that for all \(y\in A\), there exists \(y'\in A\) such that \(y=dy'\). It follows that \((x,y)=\phi(x-y'c,y')\) for all \(x,y\in A\), so \(\phi\) is surjective too. From now on, we assume that \(\phi\) is bijective.
    
    Next, we show that \(\phi\) is an automorphism if and only if \(c\) and \(d\) satisfy \eqref{eq:autom_1}--\eqref{eq:autom_3}. Since, for any \(u,v,x,y\in A\), 
    
    \begin{align*}
        \phi\left((u,v)(x,y)\right)&=\phi(ux+\mu_1 y^*v,vx^*+yu)\\
        &=\left(ux+\mu_1 y^*v+(vx^*+yu)c,d(vx^*+yu)\right)\\
        &=\left(ux+\mu_1 y^*v+(vx^*)c+(yu)c,d(vx^*)+d(yu)\right),\\
        \phi(u,v)\phi(x,y)&=(u+vc,dv)(x+yc,dy)\\
        &=\left(ux+u(yc)+(vc)x+(vc)(yc)+\mu_2 (dy)^*(dv),\right.\\
        &\phantom{=\ \ } \left.(dv)x^*+(dv)(yc)^*+(dy)u+(dy)(vc)\right),
    \end{align*}
     the identity \(\phi\left((u,v)(x,y)\right)=\phi(u,v)\phi(x,y)\) holds for all \(u,v,x,y\in A\) if and only if 
     
    \begin{align*}
            \mu_1 y^*v+(vx^*)c+(yu)c&=u(yc)+(vc)x+(vc)(yc)+\mu_2(dy)^*(dv),\\
            d(vx^*)+d(yu)&=(dv)x^*+(dv)(yc)^*+(dy)u+(dy)(vc),
    \end{align*}
     hold for all \(u,v,x,y\in A\). Similarly to the proof of \autoref{thm:inv}, these conditions are equivalent to the following conditions, which hold for any \(x,y\in A\):
     
    \begin{align}
            c  &\text{ is Cayley}, \label{eq:p-autom_1}\\
            \mu_1 xy&=(yc)(x^*c)+\mu_2(xd^*)(dy),\label{eq:p-autom_2}\\
            d&\in N_l(A),\label{eq:p-autom_3}\\
            (dy)(xc)^*&=-(dx)(yc).\label{eq:p-autom_4}
    \end{align}
    Moreover, \eqref{eq:p-autom_3} makes \eqref{eq:p-autom_4} equivalent to \(d\left(y(xc)^*\right)=-d\left(x(yc)\right)\). Since \(\phi_2\) is bijective, this equality is equivalent to \(y(xc)^*=-x(yc)\). Setting \(y=1\), by \ref{lem:1i} of \autoref{lem:1}, \(c\) is then a skew \(*\)-element. Hence \(y(xc)^*=-y(xc)\), so \(y(xc)^*=-x(yc)\) is equivalent to \(y(xc)=x(yc)\). Now, by \eqref{eq:p-autom_1}, \(c\) is Cayley, and so \((xy)c=y(xc)\) by definition. By using \ref{lem:1ii} of \autoref{lem:1}, we have \(c\in N_r(A)\), and so by \ref{lem:1v} of the same lemma, \(c\in N(A)\). Since \(c\) is a \(*\)-element, \(yc=cy^*\), which yields \eqref{eq:autom_def}. Finally, because \(c\) is a skew \(*\)-element in \(N(A)\), we have \((yc)(x^*c)=(cy^*)(x^*c)=c(xy)^*c=c^2(xy)\), making \eqref{eq:p-autom_2} equivalent to \eqref{eq:autom_3}. 

    To conclude the proof, we need to show that if \eqref{eq:autom_1}--\eqref{eq:autom_3} hold, then \(c\) is Cayley. It would then follow that \(c\) is a skew \(*\)-element with \(x(yc)=y(xc)\) for all \(x,y\in A\), since \(c\in N(A)\), and so by \ref{lem:1viii} of \autoref{lem:1}, \eqref{eq:p-autom_4} would hold. For that, it suffices to use \ref{lem:1vi} of \autoref{lem:1}: \(c\) is a \(*\)-element in \(N(A)\), so \(c\) is Cayley.
\end{proof}

\begin{remark}
    The similarity between the conditions satisfied by \(a\) and \(b\) in \autoref{thm:inv} and those satisfied by \(c\) and \(d\) in \autoref{thm:autom} can be explained as follows. If \(\gamma(x,y)=(x^*+ay^*,by)\) defines an involution, then \(\beta\circ\gamma(x,y)= (x+ya^*,by)\) is an automorphism.
\end{remark}

\begin{proposition}
    Assume that \(A\) is a \(2\)-torsion-free \(*\)-algebra, \(\mu_1\in K\setminus\{0\}\) is cancellable, and let \(c,d\in A\) where \(c\) satisfies \eqref{eq:autom_1}. Then \eqref{eq:autom_2}--\eqref{eq:autom_3} are equivalent to the following conditions:
    \begin{equation}
        d\in N(A), \quad  x\mapsto dx \text{ is bijective}, \quad \text{and} \quad \mu_2d^*d=\mu_2 dd^*=\mu_1. \label{eq:autom_2tf_d}
    \end{equation}
    Moreover, if \(\mu_2=\mu_1\), then \(c\) and \(d\) satisfy \eqref{eq:autom_2}--\eqref{eq:autom_3} if and only if
    \begin{equation}
        d\in N(A)\quad \text{and}\quad dd^* = d^*d = 1. \label{eq:autom_2tf_eq_d}
    \end{equation}
\end{proposition}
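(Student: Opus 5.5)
Throughout, I would first use \eqref{eq:autom_1} together with $2$-torsion-freeness: $c$ is a skew $*$-element, so $c^2=0$ by \ref{lem:1ix} of \autoref{lem:1}. Hence \eqref{eq:autom_3} becomes
\[
\mu_1xy=\mu_2(xd^*)(dy)\qquad\text{for all }x,y\in A. \tag{$\dagger$}
\]
The plan is to prove that \eqref{eq:autom_2} together with $(\dagger)$ is equivalent to \eqref{eq:autom_2tf_d}.

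\emph{Converse direction.} This is the easy one. Suppose $d\in N(A)$, $x\mapsto dx$ is bijective, and $\mu_2d^*d=\mu_1$. Then $d\in N_l(A)$ trivially. Since $d\in N(A)$, also $d^*\in N(A)$, so the product $(xd^*)(dy)$ can be reassociated freely. Therefore
\[
\mu_2(xd^*)(dy)=x\bigl((\mu_2d^*d)y\bigr)=\mu_1xy,
\]
which is $(\dagger)$.

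\emph{Forward direction.} Assume $d\in N_l(A)$, that $x\mapsto dx$ is bijective, and $(\dagger)$. The steps are as follows.
\begin{enumerate}[label=(\alph*)]
\item Applying the involution to $d(xy)=(dx)y$ gives $(y^*x^*)d^*=y^*(x^*d^*)$, so $d^*\in N_r(A)$.
\item Setting $x=y=1$ in $(\dagger)$ gives $\mu_2d^*d=\mu_1$. Setting $x=1$ gives $\mu_1y=\mu_2d^*(dy)$.
\item Left-multiply the last identity by $d$ and use $d\in N_l(A)$ to get $\mu_1dy=\mu_2(dd^*)(dy)$. By surjectivity of $y\mapsto dy$, this reads $\mu_1z=\mu_2(dd^*)z$ for all $z$, and $z=1$ gives $\mu_2dd^*=\mu_1$.
\item Setting $y=1$ in $(\dagger)$ gives $\mu_1x=\mu_2(xd^*)d$.
\item For the middle nucleus, substitute $x\mapsto xd$ in $(\dagger)$. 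Since $d^*\in N_r(A)$, we have $(xd)d^*=x(dd^*)$, and $\mu_2x(dd^*)=\mu_1x$. Hence
\[
\mu_1(xd)y=\mu_1x(dy).
\]
Cancellability of $\mu_1$ then gives $d\in N_m(A)$.
\end{enumerate}

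\emph{Main obstacle: $d\in N_r(A)$.} Equivalently, one needs $d^*\in N_l(A)$. Writing $w=dy$, which ranges over all of $A$, and using $d\in N_l(A)\cap N_m(A)$, I get
\[
(d^*w)z=\bigl((d^*d)y\bigr)z,\qquad d^*(wz)=(d^*d)(yz).
\]
Multiplying by $\mu_2$ and using $\mu_2d^*d=\mu_1\in K$ shows $\mu_2(d^*,w,z)=0$. However, $\mu_2$ is not assumed cancellable. I would try to convert this into $\mu_1(d^*,w,z)=0$. One route is to write $\mu_1=\mu_2dd^*$ and insert it via the nuclear properties of $d$ already obtained. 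Another is to apply the involution to $(\dagger)$, obtaining
\[
\mu_1y^*x^*=\mu_2(y^*d^*)(dx^*),
\]
and run the middle-nucleus argument of step~(e) with the roles of left and right swapped. This step is where I expect the real work to lie.

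\emph{Case $\mu_1=\mu_2$.} Here cancellability of $\mu_1$ turns $\mu_2d^*d=\mu_2dd^*=\mu_1$ into $dd^*=d^*d=1$. Conversely, given \eqref{eq:autom_2tf_eq_d}, the map $x\mapsto d^*x$ inverts $x\mapsto dx$ because $d\in N(A)$, so the bijectivity condition becomes redundant. The claimed equivalence with \eqref{eq:autom_2tf_eq_d} then follows from the first part.
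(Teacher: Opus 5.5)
Your steps (a)--(e) and the converse direction are correct. Step (e) is exactly the paper's argument for \(d\in N_m(A)\), and your step (c) is a harmless variant of the paper's derivation of \(\mu_2dd^*=\mu_1\). However, the proposal does not prove \(d\in N_r(A)\), and that is needed for \(d\in N(A)\). You reduce it to \(\mu_2(d^*,w,z)=0\) for all \(w,z\), correctly, and then stop because \(\mu_2\) is not assumed cancellable. So the forward implication, and with it the case \(\mu_1=\mu_2\), is left incomplete.

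Of your two suggested routes, the second cannot work. Applying the involution to \((\dagger)\) gives \(\mu_1y^*x^*=\mu_2(y^*d^*)(dx^*)\). After renaming \(y^*\to x\) and \(x^*\to y\), this is \((\dagger)\) again, so it carries no new information.

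The missing observation is short. By your step (d), \(\mu_1t=\mu_2(td^*)d=((\mu_2t)d^*)d\) for every \(t\in A\). Hence \(\mu_2t=0\) forces \(\mu_1t=0\), and so \(t=0\). In other words, \(\mu_2\) is automatically cancellable once \(\mu_1\) is cancellable and \((\dagger)\) holds. Applying this to \(t=(d^*,w,z)\) gives \(d^*\in N_l(A)\), that is, \(d\in N_r(A)\).

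The paper instead argues directly, without passing through \(d^*\in N_l(A)\). It substitutes \(x\mapsto x(yd)\) in \(\mu_1x=\mu_2(xd^*)d\) and uses \(d^*\in N_r(A)\) twice together with \(\mu_2dd^*=\mu_1\). This yields \(\mu_1x(yd)=\mu_2\bigl(x(y(dd^*))\bigr)d=\mu_1(xy)d\), and cancellability of \(\mu_1\) gives \(d\in N_r(A)\). With either fix inserted, the rest of your argument goes through as written, including the case \(\mu_1=\mu_2\).
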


\begin{proof}
    First, suppose that \eqref{eq:autom_1}--\eqref{eq:autom_3} hold. Since \(A\) is \(2\)-torsion-free, from \ref{lem:1ix} of \autoref{lem:1}, it follows that \(c^2=0\). Equality \eqref{eq:autom_3} then becomes
    \begin{equation}\label{eq:autom_3'}
        \mu_1xy=\mu_2 (xd^*)(dy) \qquad \text{for all } x,y\in A.
    \end{equation}
    Setting \(x=y=1\) in this equality yields \(\mu_1=\mu_2d^*d\). Moreover, since \(x\mapsto dx\) is bijective, there is \(d'\in A\) such that \(dd'=1\). Hence, by \eqref{eq:autom_3'}, \(\mu_1=\mu_1dd'=\mu_2(dd^*)(dd')=\mu_2dd^*\). To prove that \(d\in N(A)\), first note that \(d\in N_l(A)\) if and only if \(d^*\in N_r(A)\). Now replace \(x\) by \(x(yd)\) and \(y\) by \(1\) in \eqref{eq:autom_3'} and use that \(d^*\in N_r(A)\) to get \(\mu_1x(yd)=\mu_2\left(\left(x(yd)\right)d^*\right)d=\mu_2\left(x\left(y(dd^*)\right)\right)d=\mu_1(xy)d\). As \(\mu_1\) is cancellable, we see that \(d\in N_r(A)\). In a similar fashion, by replacing \(x\) by \(xd\), we get that \(\mu_1(xd)y=\mu_2\left((xd)d^*\right)(dy)=\mu_1 x(dy)\), which implies that \(d\in N_m(A)\). We conclude that \(d\in N(A)\).

    Conversely, if \eqref{eq:autom_1} and \eqref{eq:autom_2tf_d} hold, then \(\mu_2(xd^*)(dy)=x(\mu_2 d^*d)y=\mu_1xy\).
    
    Now, suppose that \(\mu_1=\mu_2\). Then the equalities in \eqref{eq:autom_2tf_d} are equivalent to \(d^*d=dd^*=1\). Moreover, these equalities and the fact that \(d\in N(A)\) imply that if \(x\in A\), then \(d(d^*x)=x\), and if \(dx=dy\), then \(x=(d^*d)x=d^*(dx)=d^*(dy)=(d^*d)y=y\), so \(x\mapsto dx\) is bijective.
\end{proof}

\begin{proposition}\label{prop:isom-inv}
    Let \(A\) be \(*\)-algebra, \(\mu_1,\mu_2\in K\setminus\{0\}\), and suppose that \[\phi\colon\Cay(A,\mu_1)\to\Cay(A,\mu_2)\] is an \(A\)-isomorphism defined by \[\phi(x,y)=(x+cy^*,dy)\qquad (x,y\in A),\] where \(c,d\in A\) satisfy \eqref{eq:autom_1}--\eqref{eq:autom_3}. Then the inverse \(\phi^{-1}\) of \(\phi\) is given by \[\phi^{-1}(x,y)=\big(x-c\left(d^{-1}\right)^*y^*,d^{-1}y\big)\qquad (x,y\in A),\]
    where \(d^{-1}\in N_l(A)\) is the unique left and the unique right inverse of \(d\).
\end{proposition}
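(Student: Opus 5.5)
First I will establish that \(d\) has a two-sided inverse lying in \(N_l(A)\). Since \(x\mapsto dx\) is bijective by \eqref{eq:autom_2}, there is a unique \(d'\in A\) with \(dd'=1\). Then \(d(d'd)=(dd')d=d=d\cdot 1\), using \(d\in N_l(A)\). Injectivity of \(x\mapsto dx\) gives \(d'd=1\). So \(d^{-1}\colonequals d'\) is both a left and a right inverse of \(d\). It is the unique right inverse by injectivity. It is also the unique left inverse: if \(ed=1\), then \(d(ed)=d\) gives \((de)d=d\), and hence \(((de)d)d'=dd'\). Since \(d\in N_l(A)\), the associator \((x,d,d')\) need not vanish, so this cancellation must be handled separately. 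I would instead argue via \(d(ed-1)=0\), which gives \((de)d=d\). Then, using \(d\in N_l(A)\), \(d((ed)x)=d(e(dx))\) shows that \((ed)x=e(dx)\) by injectivity, so \(x=e(dx)\) for all \(x\). Taking \(x=d'\) yields \(d'=e\).

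To show \(d^{-1}\in N_l(A)\), I will verify \((d^{-1}x)y=d^{-1}(xy)\) by applying \(d\) to both sides. On the left, \(d((d^{-1}x)y)=(d(d^{-1}x))y=((dd^{-1})x)y=xy\), using \(d\in N_l(A)\) twice. On the right, \(d(d^{-1}(xy))=xy\). Injectivity of left multiplication by \(d\) then gives equality. Note also that \(d(d^{-1}y)=y\) and \(d^{-1}(dy)=y\) for all \(y\), by the same left-nucleus manipulations.

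Next I will compute directly with the proposed \(\psi(x,y)=(x-c(d^{-1})^*y^*,d^{-1}y)\). We have
\[
\phi(\psi(x,y))=\bigl(x-c(d^{-1})^*y^*+c(d^{-1}y)^*,\,d(d^{-1}y)\bigr).
\]
The second component is \(y\). For the first component, \((d^{-1}y)^*=y^*(d^{-1})^*\), and since \(c\in N(A)\) we get \(c(y^*(d^{-1})^*)=(cy^*)(d^{-1})^*\). So the issue is whether \((cy^*)(d^{-1})^*=c((d^{-1})^*y^*)\).

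I expect this step to be the main obstacle. My first plan is to use \(c\in N(A)\) together with \([A,A]c=c[A,A]=0\), which follows from \ref{lem:1vii}--\ref{lem:1vii'} of \autoref{lem:1}, as in the proof of \autoref{prop:nuclear-inv}. These give \(c(uv)=(cu)v=c(vu)\). Therefore \(c(y^*(d^{-1})^*)=c((d^{-1})^*y^*)\), and the first component is \(x\).

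Similarly,
\[
\psi(\phi(x,y))=\bigl(x+cy^*-c(d^{-1})^*(dy)^*,\,d^{-1}(dy)\bigr).
\]
The second component is \(y\). For the first, \((d^{-1})^*(dy)^*=(d^{-1})^*(y^*d^*)\), and by the commutator-annihilating property of \(c\) we have \(c\bigl((d^{-1})^*(y^*d^*)\bigr)=c\bigl((y^*d^*)(d^{-1})^*\bigr)=c(y^*(d^{-1}d)^*)\). This last step again uses \(c\) absorbing reassociation: writing \(c(uvw)\) as \(((cu)v)w\) via nucleus membership, the bracketing is irrelevant, and commuting inside is allowed. The result is \(cy^*\), so the first component is \(x\).

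Hence \(\psi=\phi^{-1}\).
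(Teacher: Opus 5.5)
Your construction of \(d^{-1}\) as a two-sided inverse, your proof that \(d^{-1}\in N_l(A)\), and your check of \(\phi\circ\psi\) follow the paper's argument. Two steps fail as written.

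\textbf{Uniqueness of the left inverse.} You claim that \(d\in N_l(A)\) gives \(d((ed)x)=d(e(dx))\), but this does not follow. Since \(ed=1\), the left side is \(dx\), while \(d\in N_l(A)\) only yields \(d(e(dx))=(de)(dx)\). Because \(x\mapsto dx\) is onto, the asserted equality for all \(x\) is equivalent to \(de=1\), which is exactly what you are trying to prove, so the argument is circular.

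It also cannot be repaired in the stated generality. Take \(K=\mathbb{Z}/4\mathbb{Z}\), \(\mu_1=\mu_2=2\), and \(c=0\). Let \(B\) be the \(\mathbb{F}_2\)-algebra with basis \(1,d,f\) and products \(d^2=1\), \(df=f\), \(fd=0\), \(f^2=0\). Let \(A=B\times B^{\mathrm{op}}\) with the exchange involution, and put \(D=(d,1)\). One checks the following:
\begin{enumerate}[label=(\alph*)]
\item \(d\in N_l(B)\), so \(D\in N_l(A)\), and \(x\mapsto Dx\) is bijective.
\item \eqref{eq:autom_3} holds trivially because \(2A=0\).
\item Nevertheless \((d,1)\) and \((d+f,1)\) are distinct left inverses of \(D\).
\end{enumerate}
The paper's own proof only establishes uniqueness of the right inverse, via \(d''=(d'd)d''=d'(dd'')=d'\). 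The formula for \(\phi^{-1}\) only needs \(d^{-1}\) to be a two-sided inverse lying in \(N_l(A)\), so you should simply drop the left-uniqueness part. Left uniqueness does hold when left multiplication by \(\lambda=\mu_1-c^2\in Z(A)\) is injective: putting \(x=1\), \(y=d^{-1}\) in \eqref{eq:autom_3} gives \(\lambda d^{-1}=\mu_2d^*\in N_r(A)\), and expanding \((e,d,\lambda d^{-1})=0\) yields \(\lambda(d^{-1}-e)=0\).

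\textbf{The reassociation step in \(\psi\circ\phi\).} You justify it by saying the bracketing inside \(c(\cdot)\) is irrelevant, and that is false in general. Membership \(c\in N(A)\) gives \(c((uv)w)=((cu)v)w\), but only \(c(u(vw))=(cu)(vw)\). Hence \(c\,(u,v,w)=(cu,v,w)\), which need not vanish. For example, take \(J\) a unital commutative non-associative \(\mathbb{F}_2\)-algebra, \(A=\mathbb{F}_2[t]/(t^2)\otimes J\) with the identity involution, and \(c=t\otimes 1\). Then \(c\) is a skew \(*\)-element in \(N(A)\) with \(c\,(A,A,A)\neq0\).

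The particular reassociation you need, \((y^*d^*)(d^{-1})^*=y^*(d^*(d^{-1})^*)\), is nonetheless true for a different reason: \(d^{-1}\in N_l(A)\) implies \((d^{-1})^*\in N_r(A)\). The paper avoids the issue by substituting \(y\mapsto dy\) into the identity \(c(d^{-1})^*y^*=c(d^{-1}y)^*\), already obtained for \(\phi\circ\psi\). This gives \(c(d^{-1})^*(dy)^*=c(d^{-1}(dy))^*=cy^*\) directly.
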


\begin{proof}
    Let \(\psi\colon\Cay(A,\mu_2)\to \Cay(A,\mu_1)\) be a map defined by 
    \[\psi(x,y)=\big(x-c\left(d^{-1}\right)^*y^*,d^{-1}y\big) \qquad (x,y\in A),\]
    where \(d^{-1}\in N_l(A)\) is the unique left and the unique right inverse of \(d\). Note that such an element exists since \(x\mapsto dx\) is bijective, and so \(d\) has a right inverse \(d'\). Moreover, \(d(d'd)=(dd')d=d=d1\), so \(d'd=1\), hence \(d'\) is also a left inverse of \(d\). Now, for any \(x,y\in A\), \(d(d'(xy))=(dd')(xy)=xy\) and \(d((d'x)y)=(d(d'x))y=((dd')x)y=xy\), so by the injectivity of \(x\mapsto dx\), \(d'\in N_l(A)\) too. If \(d''\) is a right inverse of \(d\), then \(d''=(d'd)d''=d'(dd'')=d'\). Hence \(d'\) is a unique left and a unique right inverse of \(d\), which we denote by \(d^{-1}\). 
    
    Continuing, 
    \begin{align*}
        \psi\circ\phi(x,y)&=\big(x+cy^*-c\left(d^{-1}\right)^*(dy)^*,d^{-1}(dy)\big),\\
        \phi\circ\psi(x,y)&=\big(x-c\left(d^{-1}\right)^*y^*+c\left(d^{-1}y\right)^*,d\left(d^{-1}y\right)\big).
    \end{align*}
    Since \(d,d^{-1}\in N_l(A)\), we have \(d^{-1}(dy)=d(d^{-1}y)=y\) for all \(y\in A\). On the other hand, \(c\in N(A)\) is Cayley, so by \ref{lem:1vii} of \autoref{lem:1}, \([A,A]c=0\). Since \(c\) is a \(*\)-element, \([A,A]c=0\) is equivalent to \(c[A,A]=0\). Hence, for all \(y\in A\), we have \(c(d^{-1})^*y^*=cy^*(d^{-1})^*=c(d^{-1}y)^*\). Replacing \(y\) by \(dy\) in this equality yields \(c(d^{-1})^*(dy)^*=c(d^{-1}dy)^*=cy^*\), from which we get the identities \(\phi\circ \psi=\id\) and \(\psi\circ\phi=\id\).
\end{proof}

\subsection{\texorpdfstring{\(*\)}{*}-algebra isomorphisms}\label{sec:*-isomorphisms}
In this subsection, we turn to the question of describing \texorpdfstring{\(*\)}{*}-algebra isomorphisms between the Cayley doubles of \(A\) equipped with \(A\)-involutions. 

\begin{proposition}\label{prop:starisom}
  Let \(A\) be a \(*\)-algebra and let \(\mu_1,\mu_2\in K\setminus\{0\}\).  Suppose that \(\gamma_1,\gamma_2\) are \(A\)-involutions of \(\Cay(A,\mu_1)\) and \(\Cay(A,\mu_2)\), respectively, of the form
\[
\gamma_i(x,y)=(x^*+a_i y^*, b_i y),
\qquad (x,y\in A, \ i=1,2)
\]
where \(a_i,b_i\in A\) satisfy \eqref{eq:inv_a}--\eqref{eq:inv_4}. Then 
\(
\Cay_{\gamma_1}(A,\mu_1)\) and \(
\Cay_{\gamma_2}(A,\mu_2)
\) are \(A\)-isomorphic as \(*\)-algebras if and only if there exist \(c,d\in A\) satisfying
\eqref{eq:autom_1}--\eqref{eq:autom_3} such that
\begin{equation}
    a_1=(1+b_2)c^*+a_2d^*
    \quad\text{and}\quad
    b_1=d^{-1}b_2d,
\end{equation}
where \(d^{-1}\in N_l(A)\) is the unique left and the unique right inverse of \(d\).

In this case, a \(*\)-algebra \(A\)-isomorphism \[\phi\colon \Cay_{\gamma_1}(A,\mu_1)\to \Cay_{\gamma_2}(A,\mu_2)\] is given by
\[
\phi(x,y)=(x+cy^*,dy)
\qquad (x,y\in A).
\]
\end{proposition}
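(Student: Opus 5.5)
By \autoref{thm:autom}, every \(A\)-isomorphism \(\phi\colon\Cay(A,\mu_1)\to\Cay(A,\mu_2)\) has the form \(\phi(x,y)=(x+cy^*,dy)\) with \(c,d\) satisfying \eqref{eq:autom_1}--\eqref{eq:autom_3}. Such a \(\phi\) is a \(*\)-algebra isomorphism exactly when \(\phi\circ\gamma_1=\gamma_2\circ\phi\). The plan is therefore to compute both sides on an arbitrary \((x,y)\), compare components, and show that the resulting identities in \(x,y\) are equivalent to the two displayed equalities. This also gives the last claim, that this same \(\phi\) is the \(*\)-isomorphism.

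The two sides are
\[
\phi(\gamma_1(x,y))=\big(x^*+a_1y^*+c(b_1y)^*,\ d(b_1y)\big)
\]
and
\[
\gamma_2(\phi(x,y))=\big(x^*+yc^*+a_2(dy)^*,\ b_2(dy)\big).
\]

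\emph{Second component.} We need \(d(b_1y)=b_2(dy)\) for all \(y\).
\begin{itemize}
\item Since \(d\in N_l(A)\), this reads \((db_1)y=b_2(dy)\).
\item Setting \(y=1\) gives \(db_1=b_2d\). Multiplying on the left by \(d^{-1}\in N_l(A)\) (\autoref{prop:isom-inv}) gives \(b_1=d^{-1}(db_1)=d^{-1}(b_2d)\).
\item Conversely, \(b_1=d^{-1}(b_2d)\) gives \(db_1=b_2d\). Then \(d(b_1y)=(db_1)y=(b_2d)y=b_2(dy)\), using \(d,b_2\in N_l(A)\).
\end{itemize}

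\emph{First component.} We need \(a_1y^*+c(b_1y)^*=yc^*+a_2(dy)^*\) for all \(y\).
\begin{itemize}
\item Rewrite every term as a fixed element times \(y^*\). We have \(yc^*=c^*y^*\), since \(c^*=-c\) is a \(*\)-element.
\item By \autoref{lem:1}\ref{lem:1vii}--\ref{lem:1vii'}, \(a_2,c\in N(A)\) are skew \(*\)-elements, hence Cayley, so \([A,A]a_2=a_2[A,A]=0\), and likewise for \(c\). Combined with nuclearity, this makes \(a_2\) and \(c\) insensitive to the order of factors to their right. Hence \(a_2(dy)^*=a_2(y^*d^*)=(a_2d^*)y^*\) and \(c(b_1y)^*=(cb_1^*)y^*\).
\item The first-component identity is thus equivalent to \(\big(a_1+cb_1^*-c^*-a_2d^*\big)y^*=0\) for all \(y\). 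Since \(y=1\) is allowed, this is equivalent to \(a_1=c^*-cb_1^*+a_2d^*\).
\end{itemize}

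\emph{Main obstacle.} It remains to show \(-cb_1^*=b_2c^*\), i.e.\ \(cb_1^*=b_2c\), given \(b_1=d^{-1}(b_2d)\).
\begin{itemize}
\item Expand \(b_1^*=d^*\big(b_2^*(d^{-1})^*\big)\).
\item Since \(c\) is nuclear and kills commutators, the factors after \(c\) may be reordered and reassociated freely. This gives \(cb_1^*=\big(c\,b_2^*\big)\big(d^*(d^{-1})^*\big)\), up to that reordering.
\item Now \(d^*(d^{-1})^*=(d^{-1}d)^*=1\), so \(cb_1^*=cb_2^*=b_2c\), the last step because \(c\) is a \(*\)-element.
\item Therefore \(c^*-cb_1^*=c^*-b_2c=c^*+b_2c^*=(1+b_2)c^*\).
\end{itemize}
The care needed here is to justify each reassociation through \(c\in N(A)\) and \(c[A,A]=0\) (iterating as in the proof of \autoref{prop:nuclear-inv}), since \(d\) and \(b_2\) are only known to lie in \(N_l(A)\).

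Assembling the two components gives the stated equivalence, with \(\phi\) itself serving as the \(*\)-algebra \(A\)-isomorphism.
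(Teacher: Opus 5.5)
Your proof works once one step is re-justified, and it takes a slightly different route from the paper. The paper checks $\gamma_1=\phi^{-1}\circ\gamma_2\circ\phi$ using the explicit formula for $\phi^{-1}$ from \autoref{prop:isom-inv}, and reads off $a_1=-c+a_2d^*-cb_2^*$ directly. You check the intertwining relation $\phi\circ\gamma_1=\gamma_2\circ\phi$ instead. This needs only the existence of $d^{-1}\in N_l(A)$, not the formula for $\phi^{-1}$, but it leaves $cb_1^*$ in the first component, which you must convert to $cb_2^*$ using the second-component relation. Your handling of the second component, and your reduction of the first to $a_1=c^*-cb_1^*+a_2d^*$, are correct.

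The step whose stated justification is wrong is $cb_1^*=cb_2^*$. You claim that $c\in N(A)$ and $c[A,A]=0$ let you reorder \emph{and reassociate} the factors after $c$ freely; the reassociation part is false. These properties give $c(uv)=c(vu)$ and $c(uv)=(cu)v$, so each $cu$ also kills commutators. They do not control
\[
c((xy)z)-c(x(yz))=c\,(x,y,z).
\]
For a counterexample, take $A=B\otimes K[t]/(t^2)$ with $B$ commutative but not associative, $(x\otimes p(t))^*=x\otimes p(-t)$, and $c=1\otimes t$. Then $c$ is a nuclear skew $*$-element killing $[A,A]$, yet $c\,(x,y,z)\neq 0$ in general.

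The reassociation you need comes from exactly the hypothesis you treated as a limitation. Since $(x,y,z)^*=-(z^*,y^*,x^*)$, the facts $d,d^{-1},b_2\in N_l(A)$ give $d^*,(d^{-1})^*,b_2^*\in N_r(A)$. Hence
\[
cb_1^*=c\bigl(d^*(b_2^*(d^{-1})^*)\bigr)=c\bigl((b_2^*(d^{-1})^*)d^*\bigr)=c\bigl(b_2^*((d^{-1})^*d^*)\bigr)=c\bigl(b_2^*(dd^{-1})^*\bigr)=cb_2^*,
\]
where the second equality uses $c[A,A]=0$ and the third uses $d^*\in N_r(A)$. Your expansion $b_1^*=d^*(b_2^*(d^{-1})^*)$ likewise relies on $(d^{-1})^*\in N_r(A)$.

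With this replacement your argument is complete. The paper needs essentially the same identity, hidden in its claim $c(d^{-1})^*(b_2dy)^*=cb_2^*y^*$, and it attributes that claim precisely to $d,d^{-1},b_2\in N_l(A)$.
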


\begin{proof}
    A map \(\phi\colon\Cay_{\gamma_1}(A,\mu_1)\to\Cay_{\gamma_2}(A,\mu_2)\) is a \(*\)-algebra \(A\)-isomorphism if and only if it is an \(A\)-isomorphism \(\Cay(A,\mu_1)\to \Cay(A,\mu_2)\) satisfying \(\phi\circ \gamma_1=\gamma_2\circ \phi\), or equivalently, \(\gamma_1=\phi^{-1}\circ \gamma_2\circ \phi\). Therefore, let \(\phi\colon \Cay_{\gamma_1}(A,\mu_1)\to \Cay_{\gamma_2}(A,\mu_2)\) be an \(A\)-isomorphism defined by 
    \[\phi(x,y)=(x+cy^*,dy)\qquad (x,y\in A),\]
    where \(c,d\in A\) satisfy \eqref{eq:autom_1}--\eqref{eq:autom_3}. Then, by using \autoref{prop:isom-inv}, we want to show that \(\gamma_1(x,y)=(x^*+a_1y^*,b_1y)\) is equal to
    \begin{align*}
        \phi^{-1}\circ \gamma_2\circ\phi(x,y)&=\phi^{-1}\circ\gamma_2(x+cy^*,dy)\\
   &=\phi^{-1}\left((x+cy^*)^*+a_2(dy)^*,b_2(dy)\right)\\
        &=\big(x^*+yc^*+a_2(y^*d^*)-c\left(d^{-1}\right)^*(b_2dy)^*,d^{-1}\left(b_2(dy)\right)\big),
    \end{align*}
    for all \(x,y\in A\). Since \(a_2,c\in N(A)\) are Cayley, by \ref{lem:1vii} of \autoref{lem:1}, \([A,A]a_2=[A,A]c=0\). Since \(a_2\) and \(c\) are \(*\)-elements, \([A,A]a_2=[A,A]c=0\) is equivalent to \(a_2[A,A]=c[A,A]=0\). Moreover, \(a_2\in N_l(A)\), so \(a_2(y^*d^*)=a_2(d^*y^*)=(a_2d^*)y^*\). Similarly, by using that \(d, d^{-1}, b_2\in N_l(A)\), we obtain \(c(d^{-1})^*(b_2dy)^*=cb_2^*y^*\) and \(d^{-1}\left(b_2(dy)\right)=(d^{-1}b_2)(dy)=(d^{-1}b_2d)y\). Hence \(\gamma_1=\phi^{-1}\circ\gamma_2\circ\phi\) if and only if
    \[a_1y^*=(-c+a_2d^*-cb_2^*)y^* \quad\text{and}\quad b_1y=(d^{-1}b_2d)y
    \]
    hold for all \(y\in A\), which is equivalent to the same equalities holding for \(y=1\). By using that \(c\) is a skew \(*\)-element, we arrive at the desired conclusion.
\end{proof}

Recall from \autoref{cor:alpha-beta-involutions} that for any \(*\)-algebra \(A\), the \(A\)-involutions \(\alpha\) and \(\beta\) of \(\Cay(A,\mu)\) are defined by \(\alpha(x,y)\colonequals(x^*,-y)\) and \(\beta(x,y)\colonequals (x^*,y)\) for all \(x,y\in A\). With this notation, we can state the following two results:

\begin{corollary}
    Let \(A\) be a \(*\)-algebra, \(\mu_1,\mu_2\in K\setminus\{0\}\), and suppose that \(\gamma\) is an \(A\)-involution of \(\Cay(A,\mu_1)\). If  \(\Cay_{\gamma}(A,\mu_1)\) and \(\Cay_{\alpha}(A,\mu_2)\) are \(A\)-isomorphic as \(*\)-algebras, then \(\gamma =\alpha\). 
\end{corollary}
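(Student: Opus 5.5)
Our plan is to apply \autoref{prop:starisom} with \(\gamma_1=\gamma\) and \(\gamma_2=\alpha\). By \autoref{thm:inv}, we write \(\gamma(x,y)=(x^*+a_1y^*,b_1y)\) with \(a_1,b_1\) satisfying \eqref{eq:inv_a}--\eqref{eq:inv_4}. The involution \(\alpha\) corresponds to \(a_2=0\) and \(b_2=-1\). If \(\Cay_\gamma(A,\mu_1)\) and \(\Cay_\alpha(A,\mu_2)\) are \(A\)-isomorphic as \(*\)-algebras, then \autoref{prop:starisom} yields \(c,d\in A\) satisfying \eqref{eq:autom_1}--\eqref{eq:autom_3} such that
\[
a_1=(1+b_2)c^*+a_2d^*\quad\text{and}\quad b_1=d^{-1}b_2d.
\]

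We then substitute \(a_2=0\) and \(b_2=-1\). The first equation becomes \(a_1=(1-1)c^*+0\cdot d^*=0\). For the second, we compute \(d^{-1}(-1)d\). Since \(d^{-1}\in N_l(A)\) is a left inverse of \(d\), we have \(d^{-1}((-1)d)=-(d^{-1}d)=-1\). The only point needing care is the bracketing in the expression \(d^{-1}b_2d\). In the proof of \autoref{prop:starisom} it arose as \((d^{-1}b_2)d\), which here equals \((-d^{-1})d=-1\) as well, using only \(K\)-linearity of multiplication. Thus \(b_1=-1\) under either bracketing.

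With \(a_1=0\) and \(b_1=-1\), the form \eqref{eq:gamma_form} gives \(\gamma(x,y)=(x^*,-y)=\alpha(x,y)\), so \(\gamma=\alpha\). We expect no real obstacle: the whole argument is bookkeeping once \autoref{prop:starisom} is invoked. The only subtlety is confirming that scalars (here \(-1\)) pass through products unambiguously in the nonassociative setting, which follows from bilinearity.
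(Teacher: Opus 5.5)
Your proof is correct and takes the same approach as the paper. The paper likewise applies \autoref{prop:starisom} with \(a_2=0\), \(b_2=-1\) to get \(a_1=(1-1)c^*+0\,d^*=0\) and \(b_1=d^{-1}(-1)d=-1\), hence \(\gamma=\alpha\). Your remark that the bracketing of \(d^{-1}b_2d\) does not matter when \(b_2=-1\) is a harmless extra check.
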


\begin{proof}
    Let \(\gamma(0,1) = (a,b)\). Recall that \(\alpha(0,1) = (0,-1)\). By \autoref{prop:starisom}, there exist \(c,d\in A\) satisfying \eqref{eq:autom_1}--\eqref{eq:autom_3} such that 
    \[
        a=(1-1)c^*+0d^* = 0
    \quad\text{and}\quad
    b=d^{-1}(-1)d=-1,
    \]
    from which the result follows immediately. 
\end{proof}
We conclude the paper with a description of the low-dimensional \(*\)-algebras over \(\mathbb{R}\)  that arise from the Cayley–Dickson construction when all admissible involutions are taken into account. 

\begin{proposition}\label{prop:cay-over-R}
Let \(K=A_0=\mathbb{R}\), and for \(n=0,1\) define inductively
\[
A_{n+1}=\Cay_{\gamma_{n+1}}(A_n,\mu_n),
\qquad
\mu_n\in\mathbb{R}\setminus\{0\},
\]
where \(A_n\) is a \(*\)-algebra equipped with an involution \(\gamma_n\). Then \(\gamma_1,\gamma_2\in\{\alpha,\beta\}\). If \(\gamma_3\in\{\alpha,\beta\}\), then up to \(*\)-algebra isomorphism, the \(*\)-algebras \(A_0,A_1,A_2\) are exactly those in \autoref{tab:Cayley-Dickson} below.

\begin{table}[ht!]
\centering
\[
\begin{array}{c c l l l}
\toprule
n & \dim_\mathbb{R}A_n & A_n & \alpha& \beta\\
\midrule
0 & 1 & \mathbb{R}&&\\
\midrule
1 & 2 & \mathbb{C}&\mathbb{C}\text{-conjugation}&\text{identity} \\
 &  & \mathbb{R}\oplus\mathbb{R}&\text{exchange}&\text{identity} \\
\midrule
2 & 4 & \mathbb{H}&\mathbb{H}\text{-conjugation}&\mathbb{C}\text{-conjugation} \\
 &  & M_2(\mathbb{R})&\text{adjugate}&\text{transpose}\\
 &  & \mathbb{C}\oplus\mathbb{C}&\text{exchange; entrywise \(\mathbb{C}\)-conjugation}&\text{identity}\\
 &  & \mathbb{R}^4&\text{block exchange}&\text{identity}\\
\bottomrule
\end{array}
\]
\caption{Cayley--Dickson doubling over \(\mathbb{R}\).}
\label{tab:Cayley-Dickson}
\end{table}

\begin{figure}[ht]
\[
\resizebox{\textwidth}{!}{%
\begin{tikzpicture}[
  >=Latex,
  every node/.style={font=\scriptsize, inner sep=2pt},
  classical/.style={->, thin, dashed, draw=gray!90},
  newedge/.style={->, thin},
  lab/.style={font=\tiny, fill=white, inner sep=1pt},
  classlab/.style={font=\tiny, fill=white, inner sep=1pt, text=gray!90},
  classnode/.style={text=gray!90}
]

\node[classnode](R) at (0,0) {\(\mathbb R\)};

\node[classnode] (C)  at (-2.6,-1.8) {\(\mathbb C\)};
\node[classnode] (RR) at ( 2.6,-1.8) {\(\mathbb R\oplus\mathbb R\)};

\node[classnode] (H) at (-5.6,-4.0) {\(\mathbb H\)};
\node[classnode] (M2R) at (-1.9,-4.0) {\(M_2(\mathbb R)\)};
\node (CC)  at ( 1.9,-4.0) {\(\mathbb C\oplus\mathbb C\)};
\node (R4)  at ( 5.6,-4.0) {\(\mathbb R^4\)};

\draw[classical] (R) to[out=-140,in=90]
  node[classlab,pos=.30,left,xshift=-6pt,yshift=1pt] {\(-\)} (C);
\draw[classical] (R) to[out=-40,in=90]
  node[classlab,pos=.38,right,xshift=10pt,yshift=2pt] {\(+\)} (RR);

\draw[classical] (C) to[out=-140,in=90]
  node[classlab,pos=.25,left,xshift=-6pt,yshift=1pt] {\(\alpha,-\)} (H);
\draw[classical] (C) to[out=-100,in=90]
  node[classlab,pos=.72,left,xshift=-6pt,yshift=1pt] {\(\alpha,+\)} (M2R);
\draw[newedge] (C) to[out=-40,in=90]
  node[lab,pos=.30,right,xshift=6pt,yshift=1pt] {\(\beta,\pm\)} (CC);

\draw[classical] (RR) to[out=-140,in=90]
  node[classlab,pos=.25,left,xshift=-6pt,yshift=1pt] {\(\alpha,\pm\)} (M2R);
\draw[newedge] (RR) to[out=-80,in=90]
  node[lab,pos=.72,right,xshift=6pt,yshift=1pt] {\(\beta,-\)} (CC);
\draw[newedge] (RR) to[out=-40,in=90]
  node[lab,pos=.38,right,xshift=10pt,yshift=2pt] {\(\beta,+\)} (R4);
\end{tikzpicture}
}
\]\caption{Cayley--Dickson doubling over \(\mathbb{R}\). The edge labels \(+\) and \(-\) denote the sign of \(\mu\) at each step. Dashed gray arrows indicate the classical (\(\alpha\)-only) paths.}
\label{fig:Cayley-Dickson}
\end{figure}
\end{proposition}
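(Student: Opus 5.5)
We argue in three stages: first show that the only involutions arising are \(\alpha\) and \(\beta\), then enumerate the \(*\)-algebras produced over \(\mathbb{R}\), and finally identify each one concretely.

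\textbf{Only \(\alpha\) and \(\beta\) occur.} For \(\gamma_1\), note that \(A_0=\mathbb{R}\) is a field, so it has no zero divisors. \autoref{cor:alpha-beta-involutions} then gives \(\gamma_1\in\{\alpha,\beta\}\). For \(\gamma_2\), we cannot use the corollary directly, because \(A_1\) may be \(\mathbb{R}\oplus\mathbb{R}\), which has zero divisors. Instead we use \autoref{thm:inv} together with \autoref{prop:inv-2tf}. Since \(A_1\) is commutative and associative, \(2\)-torsion-free, and \(\mu_1\) is cancellable, the element \(a\) must be a skew \(*\)-element, and we want \(a=0\).
\begin{itemize}
\item If \(\gamma_1=\beta\), the involution on \(A_1\) is the identity, so \(a^*=a=-a\) forces \(a=0\).
\item If \(\gamma_1=\alpha\), then \(a\) satisfies \(ax=x^*a\) for all \(x\), and in particular \(a(x-x^*)=0\). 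Take \(x=(0,1)\). Then \(x-x^*=(0,2)\), and \((0,2)\) is invertible when \(\mu_0\neq 0\), since \((0,1)^2=(\mu_0,0)\). Hence \(a=0\).
\end{itemize}
One could equally invoke \autoref{prop:extension-I(A)} here, since \(S(\mathbb{R})=0\). With \(a=0\), \autoref{prop:inv-2tf} says \(b^*=b\) and \(b^2=1\).
\begin{itemize}
\item If \(\gamma_1=\alpha\), the self-adjoint elements are the reals, so \(b=\pm1\).
\item If \(\gamma_1=\beta\), the condition \(b^*=b\) is automatic. Then \(b^2=1\) allows \(b=\pm1\), and when \(\mu_0>0\) also the idempotent-type solutions \(b=\pm e\), where \(e=e_1-e_2\).
\end{itemize}
The main obstacle is this last case: \(A_1=\mathbb{R}\oplus\mathbb{R}\) with identity involution and \(b=\pm(1,-1)\). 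Here \(\gamma_2\notin\{\alpha,\beta\}\) as a formula, so the claim \(\gamma_2\in\{\alpha,\beta\}\) must be read up to \(*\)-isomorphism. I would try to apply \autoref{prop:starisom} with \(d\) a unit satisfying \(d^*d\,\mu_2'=\mu_1\), conjugating \(b\) to \(\pm1\). But \(A_1\) is commutative, so \(d^{-1}bd=b\), and this conjugation cannot work. So I expect these cases either to be excluded by the intended reading of the statement or to need a separate argument, for example via an isomorphism not extending the identity on \(A_1\) (such as the swap of factors). I would check this point first.

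\textbf{Enumeration.} The remaining \(*\)-algebras are obtained by running over the sign of \(\mu\), since by \autoref{thm:autom} the value \(\mu\) may be rescaled by \(d^*d\), which is a positive real or a norm in \(A_1\), and over \(\gamma\in\{\alpha,\beta\}\).
\begin{itemize}
\item At \(n=1\): \(\mu_0<0\) gives \(\mathbb{C}\) and \(\mu_0>0\) gives \(\mathbb{R}\oplus\mathbb{R}\). With \(\alpha\) the involution is conjugation, respectively the exchange of the two factors; with \(\beta\) it is the identity.
\item At \(n=2\) with \(\alpha\): this is the classical construction, giving \(\mathbb{H}\) or \(M_2(\mathbb{R})\) with their standard involutions. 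Over \(\mathbb{R}\oplus\mathbb{R}\) both signs of \(\mu_1\) yield \(M_2(\mathbb{R})\), because \(d^*d\) takes both signs there.
\item At \(n=2\) with \(\beta\), starting from \(\mathbb{C}\) with conjugation: the product is \((u,v)(x,y)=(ux+\mu yv\,\text{-twisted},\dots)\). One identifies it with \(\mathbb{C}\oplus\mathbb{C}\) through the idempotents \((1\pm j/\sqrt{|\mu|})/2\), for either sign after rescaling by \(i\).
\item At \(n=2\) with \(\beta\), starting from \(\mathbb{R}\oplus\mathbb{R}\) with the identity involution: the double is commutative, giving \(\mathbb{R}^4\) for \(\mu_1>0\) and \(\mathbb{C}\oplus\mathbb{C}\) for \(\mu_1<0\).
\end{itemize}

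\textbf{Identifying the involutions.} For the \(\beta\)-column, the involution \((x^*,y)\) on each double is identified with the listed one, namely \(\mathbb{C}\)-conjugation, transpose, or identity, by writing out explicit bases. For the \(\mathbb{C}\oplus\mathbb{C}\) entries, I would track which factor-exchange or conjugation the involution \(\alpha\) induces on the idempotent decomposition.

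Finally, distinctness of the rows follows from comparing the underlying algebras together with the fixed-point dimension of the involution, which is a \(*\)-isomorphism invariant.
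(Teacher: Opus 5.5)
Your first stage is correct, and it pins down the exceptional case more precisely than the paper's proof does.

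\begin{itemize}
\item Corollary~\ref{cor:alpha-beta-involutions} handles \(\gamma_1\), and it handles \(\gamma_2\) when \(A_1=\mathbb{C}\), with either involution.
\item Over \(\mathbb{R}\oplus\mathbb{R}\) with the exchange involution, your argument forces \(b=\pm1\).
\item So the only additional \(A_1\)-involutions are \((x,y)\mapsto(x,\pm ey)\) over \(\mathbb{R}\oplus\mathbb{R}\) with the identity involution.
\end{itemize}

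The paper's proof instead asserts that extra involutions exist over the exchange involution, which your computation rules out.

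However, you leave the exceptional case open, and the rescue you suggest cannot work. Put the identity involution on \(\mathbb{R}\oplus\mathbb{R}\) and use the identifications \(\Cay(\mathbb{R}\oplus\mathbb{R},1)\cong\mathbb{R}^4\) and \(\Cay(\mathbb{R}\oplus\mathbb{R},-1)\cong\mathbb{C}\oplus\mathbb{C}\). The involution with \(b=e\) then becomes the transposition of the last two coordinates, respectively the identity on one factor and conjugation on the other. Its fixed space is \(3\)-dimensional. Every \(\alpha\)- or \(\beta\)-double with underlying algebra \(\mathbb{R}^4\) or \(\mathbb{C}\oplus\mathbb{C}\) has fixed space of dimension \(2\) or \(4\). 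So no \(*\)-isomorphism of any kind, factor swap included, carries these to an \(\alpha\)- or \(\beta\)-double. The assertion \(\gamma_2\in\{\alpha,\beta\}\) is simply false as written. The table is a classification only under the hypothesis \(\gamma_2\in\{\alpha,\beta\}\), which is presumably what the misindexed ``\(\gamma_3\)'' intends. You should draw this conclusion explicitly rather than defer it.

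The genuine gap is in your last step. Fixed-point dimension does not separate the two \(\alpha\)-structures on \(\mathbb{C}\oplus\mathbb{C}\):
\begin{itemize}
\item the exchange involution, coming from \(\mathbb{C}\) with the identity involution;
\item entrywise conjugation, coming from \(\mathbb{R}\oplus\mathbb{R}\) with the identity involution and \(\mu_1<0\).
\end{itemize}
Both have a \(2\)-dimensional fixed space, yet the table lists them separately. You need a finer invariant. One option: in a commutative \(*\)-algebra the fixed set is a subalgebra, and here it is \(\{(z,z)\}\cong\mathbb{C}\), a field, in one case and \(\mathbb{R}\oplus\mathbb{R}\) in the other. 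Equivalently, \((1,0)\) is a self-adjoint idempotent for entrywise conjugation but not for the exchange. The paper separates these two by self-adjoint idempotents.

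Beyond this, your identification stage is only announced (``by writing out explicit bases'', ``I would track''), whereas it is the bulk of the paper's proof. The paper gives explicit isomorphisms \(\phi\) and computes \(\phi\circ\gamma\circ\phi^{-1}\). This includes the point that \(\beta\) over \(\mathbb{R}\oplus\mathbb{R}\) with the exchange involution becomes \(x\mapsto mx^Tm\), with \(m=\begin{pmatrix}0&1\\1&0\end{pmatrix}\). That involution is only \(*\)-isomorphic to the transpose, via \(x\mapsto mxm\).

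Finally, your bullet ``with \(\beta\), starting from \(\mathbb{C}\) with conjugation'' should read ``with the identity involution''. Over \(\mathbb{C}\) with conjugation, the double is \(\mathbb{H}\) or \(M_2(\mathbb{R})\) whatever involution is put on it, never \(\mathbb{C}\oplus\mathbb{C}\).
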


\begin{proof}
Let \(K=\mathbb{R}\). From \cite[Section 6]{McC85}, \(\Cay(A,\mu)\) and \(\Cay(A,\lambda^2\mu)\) are isomorphic algebras for any \(\lambda\in\mathbb{R}\setminus\{0\}\). In particular, if \(\mu>0\), then \(\Cay(A,\mu)\) is isomorphic to \(\Cay(A,(1/\sqrt{\mu})^2\mu)=\Cay(A,1)\). If \(\mu<0\), then \(\Cay(A,\mu)\) is isomorphic to \(\Cay(A,(1/\sqrt{-\mu})^2\mu)=\Cay(A,-1)\). Hence it suffices to prove the statement for \(\Cay(A,1)\) and \(\Cay(A,-1)\).\\

\noindent \(n=0\): The only involution of \(\mathbb{R}\) is the identity involution.\\

\noindent \(n=1\): If \(\mu=-1\), then we recover the classical construction of \(\mathbb{C}\) from \(\mathbb{R}\). The involutions \(\alpha,\beta\colon\mathbb{C}\to\mathbb{C}\) are complex conjugation and the identity involution, respectively. By \autoref{cor:alpha-beta-involutions}, \(\mathbb{C} = \Cay(\mathbb{R},-1)\) has no other involutions. 

If \(\mu=1\), then we obtain the split-complex numbers. An isomorphism of \(\mathbb{R}\)-algebras \(\phi\colon\Cay(\mathbb{R},1)\to\mathbb{R}\oplus\mathbb{R}\) is given by \(\phi(a,b)=(a+b,a-b)\) for all \(a,b\in\mathbb{R}\). The involution \(\alpha\colon\Cay(\mathbb{R},1)\to \Cay(\mathbb{R},1)\) is given by \(\alpha(a,b)=(a,-b)\) for all \(a,b\in\mathbb{R}\). Under the isomorphism \(\phi\), this corresponds to the exchange involution of \(\mathbb{R}\oplus\mathbb{R}\), given by \(\phi\circ\alpha\circ\phi^{-1}(a,b)=(b,a)\) for all \(a,b\in\mathbb{R}\). The involution \(\beta\colon\Cay(\mathbb{R},1)\to \Cay(\mathbb{R},1)\) is the identity involution, and thus corresponds under \(\phi\) to the identity involution of \(\mathbb{R}\oplus\mathbb{R}\). By \autoref{cor:alpha-beta-involutions}, \(\Cay(\mathbb{R},1)\) has no other involutions. \\

\noindent \(n=2\): Starting with \(\mathbb{C}\), if \(\gamma=\alpha\colon\mathbb{C}\to\mathbb{C}\) and \(\mu=-1\), then we recover the classical construction of \(\mathbb{H}\cong\{a+bi+cj+dk\mid i^2=j^2=k^2=ijk=-1,\ a,b,c,d\in\mathbb{R}\}\) from \(\mathbb{C}\). The involution \(\alpha\colon\Cay(\mathbb{C},-1)\to\Cay(\mathbb{C},-1)\) is the standard involution of \(\mathbb{H}\), which corresponds to \(i\mapsto -i\), \(j\mapsto -j\), \(k\mapsto-k\). The involution \(\beta\) corresponds to \(i\mapsto-i\), \(j\mapsto j\), \(k=ij\mapsto j(-i)=ij=k\). By \autoref{cor:alpha-beta-involutions}, \(\mathbb{H} = \Cay(\mathbb{C},-1)\) has no other involutions extending complex conjugation. 

Starting with \(\mathbb{C}\), if \(\gamma=\alpha\colon\mathbb{C}\to\mathbb{C}\) and \(\mu=1\), then we obtain the split-quaternions. An isomorphism of \(\mathbb{R}\)-algebras \(\phi\colon\Cay(\mathbb{C},1)\to M_2(\mathbb{R})\) is given by \[\phi(a+bi,c+di)=\begin{pmatrix}a+c&d-b\\b+d&a-c\end{pmatrix}\qquad (a,b,c,d\in\mathbb{R}).\]
The involutions \(\alpha,\beta\colon\Cay(\mathbb{C},1)\to\Cay(\mathbb{C},1)\) are given by 
\[
    \alpha(a+bi,c+di)=(a-bi,-c-di) \quad\text{and}\quad
    \beta(a+bi,c+di)=(a-bi,c+di), 
\]
for all \(a,b,c,d\in\mathbb{R}\). Under the isomorphism \(\phi\), \(\alpha\) corresponds to taking the matrix adjugate, while \(\beta\) corresponds to taking the matrix transpose: \(\phi\circ\alpha\circ\phi^{-1}(x)=\operatorname{adj}{x}\) and \(\phi\circ\beta\circ\phi^{-1}(x)=x^T\) for all \(x\in M_2(\mathbb{R})\). By \autoref{cor:alpha-beta-involutions}, \(\Cay(\mathbb{C},1)\) has no other involutions extending complex conjugation. 

Starting with \(\mathbb{C}\), if \(\gamma=\beta\colon\mathbb{C}\to\mathbb{C}\), then we obtain an \(\mathbb{R}\)-algebra isomorphic to \(\mathbb{C}\oplus\mathbb{C}\). Indeed, if \(\mu=-1\), then an isomorphism \(\phi\colon \Cay(\mathbb{C},-1)\to\mathbb{C}\oplus\mathbb{C}\) is given by \(\phi(x,y)=(x+yi,x-yi)\) for all \(x,y\in\mathbb{C}\). If \(\mu=1\), then an isomorphism \(\phi\colon \Cay(\mathbb{C},1)\to\mathbb{C}\oplus\mathbb{C}\) is given by \(\phi(x,y)=(x+y,x-y)\) for all \(x,y\in\mathbb{C}\). The involution \(\alpha\colon\Cay(\mathbb{C},1)\to\Cay(\mathbb{C},1)\) is given by \(\alpha(a+bi,c+di)=(a+bi,-c-di)\) for all \(a,b,c,d\in\mathbb{R}\). Under the isomorphism(s) \(\phi\), this corresponds to the exchange involution of \(\mathbb{C}\oplus\mathbb{C}\), given by \(\phi\circ\alpha\circ\phi^{-1}(x,y)=(y,x)\) for all \(x,y\in\mathbb{C}\). The involution \(\beta\colon\Cay(\mathbb{C},1)\to \Cay(\mathbb{C},1)\) is the identity involution, and thus corresponds under \(\phi\) to the identity involution of \(\mathbb{C}\oplus\mathbb{C}\). By \autoref{cor:alpha-beta-involutions}, \(\Cay(\mathbb{C},1)\) has no other involutions extending complex conjugation. 

Starting with \(\mathbb{R}\oplus\mathbb{R}\), if \(\gamma=\alpha\colon\mathbb{R}\oplus\mathbb{R}\to\mathbb{R}\oplus\mathbb{R}\), then we obtain \(M_2(\mathbb{R})\). An isomorphism \(\phi\colon\Cay(\mathbb{R}\oplus\mathbb{R},1)\to M_2(\mathbb{R})\) is given by
\[\phi\left((a,b),(c,d)\right)=\begin{pmatrix}a&c\\d&b\end{pmatrix}\qquad (a,b,c,d\in\mathbb{R}),\]
while an isomorphism \(\phi\colon\Cay(\mathbb{R}\oplus\mathbb{R},-1)\to M_2(\mathbb{R})\) is given by
\[\phi\left((a,b),(c,d)\right)=\begin{pmatrix}a&c\\-d&b\end{pmatrix}\qquad (a,b,c,d\in\mathbb{R}).\]
The involution \(\alpha\colon\Cay(\mathbb{R}\oplus\mathbb{R},\pm1)\to\Cay(\mathbb{R}\oplus\mathbb{R},\pm1)\) is given by \(\alpha\left((a,b),(c,d)\right)=\left((b,a),(-c,-d)\right)\) for all \(a,b,c,d\in\mathbb{R}\). Under the isomorphism(s) \(\phi\), \(\alpha\) corresponds to taking the matrix adjugate, \(\phi\circ\alpha\circ\phi^{-1}(x)=\operatorname{adj}x\) for all \(x\in M_2(\mathbb{R})\). The involution \(\beta\colon\Cay(\mathbb{R}\oplus\mathbb{R},\pm1)\to\Cay(\mathbb{R}\oplus\mathbb{R},\pm1)\) is given by \(\beta\left((a,b),(c,d)\right)=\left((b,a),(c,d)\right)\) for all \(a,b,c,d\in\mathbb{R}\). Under the isomorphism(s) \(\phi\), applying \(\beta\) corresponds to swapping the diagonal entries, \(\phi\circ\beta\circ\phi^{-1}(x)=mx^Tm\) where
\[m=\begin{pmatrix}0&1\\1&0\end{pmatrix}\in M_2(\mathbb{R}).\] We note that \(M_2(\mathbb{R})\) equipped with the latter involution is \(*\)-isomorphic to \(M_2(\mathbb{R})\) with matrix transpose via \(x\mapsto mxm\). In addition to the involutions \(\alpha\) and \(\beta\), the algebra \(\Cay(\mathbb{R}\oplus\mathbb{R},\pm1)\) has other involutions extending the involution \(\alpha\) of  \(\mathbb{R}\oplus\mathbb{R}\). 

Starting with \(\mathbb{R}\oplus\mathbb{R}\), if \(\gamma=\beta\colon\mathbb{R}\oplus\mathbb{R}\to\mathbb{R}\oplus\mathbb{R}\) and \(\mu=-1\), then we obtain \(\mathbb{C}\oplus\mathbb{C}\). An isomorphism \(\phi\colon\Cay(\mathbb{R}\oplus\mathbb{R},-1)\to \mathbb{C}\oplus\mathbb{C}\) is given by \(\phi\left((a,b),(c,d)\right)=(a+ci,b+di)\) for all \(a,b,c,d\in\mathbb{R}\). The involution \(\alpha\colon\Cay(\mathbb{R}\oplus\mathbb{R},-1)\to\Cay(\mathbb{R}\oplus\mathbb{R},-1)\) is given by \(\alpha\left((a,b),(c,d)\right)=\left((a,b),(-c,-d)\right)\) for all \(a,b,c,d\in\mathbb{R}\). Under the isomorphism \(\phi\), \(\alpha\) corresponds to component-wise complex conjugation, \(\phi\circ\alpha\circ\phi^{-1}(a+bi, c+di)=(a-bi,c-di)\) for all \(a,b,c,d\in\mathbb{R}\). We note that \(\mathbb{C}\oplus\mathbb{C}\) equipped with the latter involution is not \(*\)-isomorphic to \(\mathbb{C}\oplus\mathbb{C}\) with the exchange involution; the former only has the trivial self-adjoint idempotents \((0,0)\) and \((1,1)\), while the latter also has the nontrivial self-adjoint idempotent \((1,0)\). The involution \(\beta\colon\Cay(\mathbb{R}\oplus\mathbb{R},-1)\to\Cay(\mathbb{R}\oplus\mathbb{R},-1)\) is the identity involution and thus corresponds to the identity involution of \(\mathbb{C}\oplus\mathbb{C}\).

Starting with \(\mathbb{R}\oplus\mathbb{R}\), if \(\gamma=\beta\colon\mathbb{R}\oplus\mathbb{R}\to\mathbb{R}\oplus\mathbb{R} \) and \(\mu=1\), then we obtain \(\mathbb{R}^4\). An isomorphism \(\phi\colon\Cay(\mathbb{R}\oplus\mathbb{R},1)\to \mathbb{R}^4\) is given by \(\phi\left((a,b),(c,d)\right)=(a+c,a-c,b+d,b-d)\) for all \(a,b,c,d\in\mathbb{R}\). The involution \(\alpha\colon\Cay(\mathbb{R}\oplus\mathbb{R},1)\to\Cay(\mathbb{R}\oplus\mathbb{R},1)\) is given by \(\alpha\left((a,b),(c,d)\right)=\left((a,b),(-c,-d)\right)\) for all \(a,b,c,d\in\mathbb{R}\). Under the isomorphism \(\phi\), \(\alpha\) corresponds to the block permutation \((12)(34)\) on \(\mathbb{R}^4\), i.e. \(\phi\circ\alpha\circ\phi^{-1}(a,b,c,d)=(b,a,d,c)\) for all \(a,b,c,d\in\mathbb{R}\). The involution \(\beta\colon\Cay(\mathbb{R}\oplus\mathbb{R},1)\to\Cay(\mathbb{R}\oplus\mathbb{R},1)\) is the identity involution and thus corresponds to the identity involution of \(\mathbb{R}^4\). 
\end{proof}

\begin{remark}
  A natural problem is to characterize the \(*\)-algebras \(A_n\), constructed in \autoref{prop:cay-over-R}, for any \(n\geq 3\). Regarding possible involutions in the process, we note that if \(A_2 = \mathbb{H}\) or \((A_1 = \mathbb{C} \text{ and } A_2 = M_2(\mathbb{R}))\), then for any \(n\ge 2\), the algebra \(\Cay(A_n,\mu_n)\) has no other \(A_n\)-involutions other than \(\alpha\) and \(\beta\). In fact, using McCrimmon's description of the nucleus and center of Cayley doubles \cite[Theorem 6.8]{McC85}, one can show that \(N(A_{n+1}) = C(A_n) = \mathbb{R}\) for all \(n\geq 2\), provided that  \(A_2 = \mathbb{H}\) or \((A_1 = \mathbb{C} \text{ and } A_2 = M_2(\mathbb{R}))\). It follows from \autoref{prop:inv-2tf} and \autoref{prop:extension-I(A)} that if \(A_2 = \mathbb{H}\) or \((A_1 = \mathbb{C} \text{ and } A_2 = M_2(\mathbb{R}))\), then for any \(n\ge 2\), the algebra \(\Cay(A_n,\mu_n)\) has no \(A_n\)-involutions other than \(\alpha\) and \(\beta\).
\end{remark}

\end{document}